\newtheorem{lemma}{Lemma}
\newtheorem{theorem}{Theorem}
\newtheorem{proposition}{Proposition}
\newtheorem{claim}{Claim}
\newtheorem{problem}{Problem}
\newtheorem{corollary}{Corollary}
\newtheorem{remark}{Remark}
\newcommand{\G}{\Gamma}
\newcommand{\Om}{\Omega}
\newcommand{\GG}{\mathbb{G}}
\newcommand{\PP}{\mathbb{P}}
\newcommand{\forme}[1]{}
\def\wbull{\hfill\vrule height .9ex width .8ex depth -.1ex}
\begin{document}

 \begin{center}
 {\Large \bf On the Sombor index of graphs with given connectivity and number of bridges}\\

 \vspace{6mm}

 \bf {Sakander Hayat$^a$, Muhammad Arshad$^b$, Kinkar Chandra Das$^{c,*}$}

 \vspace{5mm}

 {\it $^a$Faculty of Science, Universiti Brunei Darussalam,\\
  Jln Tungku Link, Gadong BE1410, Brunei Darussalam\/} \\
{\tt E-mail: sakander1566@gmail.com}\\[2mm]

{\it $^b$Faculty of Engineering Sciences, GIK Institute of Engineering Sciences and Technology,\\
Topi, Swabi, 23460, Pakistan\/} \\
{\tt E-mail: arshadswabi@outlook.com}\\[2mm]

{\it $^c$Department of Mathematics, Sungkyunkwan University, \\
 Suwon 16419, Republic of Korea\/} \\
{\tt E-mail: kinkardas2003@gmail.com}

 \hspace*{30mm}

 \end{center}

 \baselineskip=0.23in
	
\begin{abstract}

Recently in 2021, Gutman introduced the Sombor index of a graph, a novel degree-based topological index.
It has been shown that the Sombor index efficiently models the thermodynamic properties of chemical compounds.
Assume $\mathbb{B}_n^k$ (resp. $\mathbb{V}_n^k$) comprises all graphs with order $n$ having
number of bridges (resp. vertex-connectivity) $k$. Horoldagva \& Xu (2021) characterized
graphs achieving the maximum Sombor index of graphs in $\mathbb{B}_n^k$.
This paper characterizes graphs achieving the minimum Sombor index in $\mathbb{B}_n^k$.
Certain auxiliary operation on graphs in $\mathbb{B}_n^k$ are introduced and employed for the characterization.
Moreover, we characterize graphs achieving maximum Sombor index in $\mathbb{V}_n^k$.
Some open problems, which naturally arise from this work, have been proposed at the end.\\

\noindent
{\bf 2020 MCS:} 05C35, 05C09, 05C92.\\[1mm]
{\bf Keywords:}  Graph; Sombor index; Bridges; Vertex-connectivity.
\end{abstract}
	
\baselineskip=0.28in

\section{Introduction}\label{sec1}

Dependence of physicochemical properties of a compound on its molecular structure
is a cornerstone idea in contemporary chemistry. Information regarding this structural
dependence is frequently retrieved by employing various advance mathematical methods
such as graph-theoretic topological descriptors. These numerical invariants have diverse
amount of potential applications in chemistry, pharmacy, material science, among others,
\cite{GF2010,TC2000}. Degree-based topological invariants \cite{GutmanDegree2013} are structured based on degrees
of vertices in a chemical graphs. Their mathematical and computational simplicity make
them one of most successful class of graphical indices \cite{GF2010}.
Lately, a novel degree-based index known as the Sombor index is put forwarded by
Gutman in 2021. The defining structure of the Sombor index is:
\begin{equation*}
SO(\G)=\sum_{uv\in E(\G)}\sqrt{\mathrm{deg}_\G(u)^2+\mathrm{deg}_\G(v)^2},
\end{equation*}
in which $\mathrm{deg}_\G(x)$ refers to the valency/degree of $x\in V(\G)$.\\

Immediately after its inception, the chemical applicability of the Sombor index has been studied.
Red\v{z}epovi\'{c} \cite{Red2021} introduced the Sombor index in quantitative structure property
relationship (QSPR) modeling of entropy and heat of vaporization of alkanes. Experimental results
of the study asserted that the Sombor index possesses a strong correlation potential with these physicochemical
properties. Moreover, the Sombor index has been proven to have potential in simulating thermodynamic
properties of hydrocarbons and it showed a significant potential in the simulation.
After its chemical applicability was reported, the Sombor index has been studied by
mathematicians as well as theoretical chemists. Various mathematical properties of the Sombor index
have been reported, for instance, in \cite{AG2021,CLW2022,CGR2021,CR2021,DCCS2021,DGA1,DG2022,DS1,FYL2021,Filipovski2021,Gutman1,GutmanTrees,
Gutman2,HX2021,LWZ2022,Lin2021,Liu2021a,Liu2021b,LYH2022,LYTL2021,MMAM2021,MMM2021,NSW2022,
RDA2021,Shang2022,WMLF2021,YJDM}.\\

Although an extensive amount of literature is available on various mathematical properties of Sombor index,
characterization of graphs attaining maximal/minimal Sombor index of graphs with a fix property has captured researchers'
attention as of late. For example, Hayat \& Rehman \cite{HR2022} studied minimum Sombor index of graphs with 
fixed amount of articulation/cut-vertices.
Sun \& Du \cite{SD2022}  characterized extremal Sombor index of graphs having give the domination number in prior.
Trees and unicylic graphs with fixed maximum degree and matching number achieving the maximum/minimum
Sombor index were classified by Zhou et al. \cite{ZLM2021a,ZLM2021b}. Recently in 2022, Aashtab et al. \cite{AAMNS2022}
proved a result asserting that the difference among the minimum and minimum valencies attaining the minimum Sombor
index is not greater than 1. A rich survey on bounds and other extremal results has been put forwarded by
Liu et al. \cite{LGYH2022}.\\

Studying extremal graphs with respect to various topological descriptors of graphs 
with given vertex-connectivity as well as possessing a fixed amount of bridges and has been an active area of contemporary research.
Feng et al. \cite{FHL2010} studied the first maximum/minimum graphs corresponding to Zagreb indices in
$\mathbb{B}_n^k$ i.e. $n$-vertex graphs with $k$ bridges.
Chen \& Liu \cite{CL2014} studied the second and third minimum and maximum
graphs respective to Zagreb indices in $\mathbb{B}_n^k$. Horoldagva et al. \cite{HBD2017}
studied extremal reduced version of the second Zagreb and the difference among $1^{\mathrm{st}}$
\& $2^{\mathrm{nd}}$ Zagreb indices of graphs $\mathbb{B}_n^k$. Amin \& Nayeem \cite{AN2020} recently
found extremal forgotten topological index of graphs in $\mathbb{B}_n^k$.
Wang \cite{Wang2010} investigated cacti possessing maximum Merrifield-Simmons index having fixed amount of bridges.
Wang et al. \cite{WWC2017} studied extremal multiplicative Zagreb indices of graphs in $\mathbb{B}_n^k$.
Regarding extremal topological indices of graphs in $\mathbb{V}_n^k$, Tomescu et al. \cite{Tomescu2015}
studied extremal values for general Randi\'{c}/sum-connectivity indices.
Li \& Fan \cite{LiFan2015} derived extremal Harary index of graphs in $\mathbb{V}_n^k$.
Zhang et al. \cite{Zhang2016} studied maximum atom-bond connectivity index of graphs in $\mathbb{V}_n^k$.\\

Recently in 2021, Horoldagva \& Xu \cite{HX2021} classified
graphs achieving the maximum Sombor index of graphs within $\mathbb{B}_n^k$.
This paper characterizes graphs attaining minimum Sombor index
within $\mathbb{B}_n^k$. Extremal graphs achieving the bound
have been classified.
Moreover, a sharp upper bound on the Sombor index of graphs within $\mathbb{V}_n^k$ has been found.
Extremal graphs attaining the bound have been classified.

\section{Preliminaries}
	
Denoted by $\G=(V,E)$, a graph comprises sets $V$ of vertices and $E$ of edges.
The number $n$ (resp. $\epsilon$) is frequently used to denote the number of vertices (resp. edges)
i.e. order (resp. size) of $\G$.  We use $(n,\epsilon)-$graph to symbolize an $n$-vertex graph
comprising $\epsilon$ number of edges. For any $y\in V(\G)$, let $\mathrm{deg}_\G(y)$ be the
number of vertices connected to $y$ by an edge. A vertex- (resp. edge-) deleted subgraph $\G-xy$
(resp. $\G-y$) is constructed by removing $xy\in E(\G)$ (resp. $y\in V(\G)$ and incident edges) from $\G$.
We denote the $n$-vertex cycle (resp. path) by $C_n$ (resp. $P_n$).\\

The number of edges traversed by a path is said to be its length. For a pair $x,y \in V(\G)$,
the distance $d(x,y)$ is a shortest path's length between them.
The diameter $D(\G)$ of $\G$ i.e. $D(\G)=\max\{d(x,y):x,y\in V(\G)\}$ is the largest distance in $\G$.
An $x-y$ geodesic is a path having length $d(x,y)$.
A pair of vertices $x,y\in V(\G)$ are said to be antipodal if $d(x,y) = D(\G)$.
A vertex $x$ in $\G$ in said to be cut-vertex of $\G$ if $\G-x$ is disconnected.
Similarly, an edge $xy$ in $\G$ is called a bridge (or cut-edge), if $\G-xy$ has two components.
A edge cut (resp. vertex cut) of $\G$ is $T\subseteq E(\G)$ (resp. $S\subseteq V(\G)$)
whose removal results in $\G$ to be disconnected. The minimum size $k$ of a vertex-cut (resp. edge-cut)
is known as the vertex-connectivity (resp. edge-connectivity).\\

Next, we present some preliminary results which are employed in subsequent sections.
The following is a standard result in graph theory.
\begin{theorem}\emph{\cite{West2001}}\label{bridgecyclesthm}
An edge $e\in E(\G)$ in $\G$ is a bridge if and only if no cycle of $\G$ contains $e$.
\end{theorem}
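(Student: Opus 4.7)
The plan is to prove the two implications separately, in each direction arguing by contraposition. Write $e = xy$.

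First I would handle the direction "some cycle contains $e$ implies $e$ is not a bridge." Given a cycle $C$ through $e$, removing the edge $e$ from $C$ leaves an $x$--$y$ path $P$ whose edges all avoid $e$. The key idea is a rerouting argument: any walk in $\G$ that uses $e$ can be converted to a walk in $\G - e$ by substituting $P$ (traversed in the appropriate direction) for each occurrence of $e$. This substitution preserves the endpoints of the walk, so for every pair of vertices $u, v$ connected in $\G$ there is still a $u$--$v$ walk in $\G - e$. Hence $\G - e$ has exactly the same connected components as $\G$, which means $e$ is not a bridge.

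Next I would establish the converse, "not a bridge implies $e$ lies on a cycle." The observation is that if $\G - e$ has the same number of components as $\G$, then the endpoints $x$ and $y$, which share a component in $\G$ because they are adjacent, must continue to share a component in $\G - e$. I would then pick any $x$--$y$ path $P'$ in $\G - e$ and adjoin the edge $e$; since $P'$ avoids $e$ and is itself a path, the result is a cycle of $\G$ passing through $e$.

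The only slightly delicate step is the rerouting in the first direction: one must check that the walk obtained by substitution really does lie in $\G - e$ and still connects the original endpoints. This is a routine verification provided one works at the level of walks rather than insisting on simple paths, so no serious obstacle is expected. I would therefore expect the proof to be short, with almost all the content sitting in the walk-substitution step.
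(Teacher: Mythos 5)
Your proposal is correct: both directions are the standard textbook argument (reroute any walk through the path $C-e$ to show $e$ is not a cut-edge, and conversely close an $x$--$y$ path of $\G-e$ with $e$ to form a cycle), and the walk-level substitution you flag as the delicate step is indeed handled correctly. The paper offers no proof of its own here --- it simply cites West --- so your argument supplies exactly the expected standard proof and there is nothing to compare against.
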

Theorem \ref{bridgecyclesthm} suggest the following corollary.
\begin{corollary}\label{Tree_bridge}
A graph $\G$ of order $n$ is a tree if and only if $\G$ has $n-1$ bridges.
\end{corollary}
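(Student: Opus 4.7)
The plan is to derive both implications directly from Theorem \ref{bridgecyclesthm}, together with the standard characterization that an $n$-vertex tree is precisely a connected acyclic graph with $n-1$ edges. The forward implication is immediate: if $\Gamma$ is a tree, then $\Gamma$ contains no cycle at all, so Theorem \ref{bridgecyclesthm} forces every one of its $n-1$ edges to be a bridge, giving exactly $n-1$ bridges.

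For the converse, suppose $\Gamma$ has exactly $n-1$ bridges, and let $B$ denote the spanning subgraph of $\Gamma$ whose edges are precisely these bridges. First I would check that $B$ is acyclic: any cycle of $B$ would be a cycle of $\Gamma$ using only bridge-edges, contradicting Theorem \ref{bridgecyclesthm}. Thus $B$ is a forest on $n$ vertices with $n-1$ edges, and the usual forest-counting identity (a forest on $n$ vertices with $c$ components has $n-c$ edges) forces $c=1$, so $B$ is a spanning tree of $\Gamma$. In particular, $\Gamma$ is connected. To conclude $\Gamma = B$, I would argue by contradiction: if $\Gamma$ carried an edge $e \notin E(B)$, then adjoining $e$ to the spanning tree $B$ would create a cycle of $\Gamma$ whose remaining edges all lie in $B$ and hence are bridges, once again contradicting Theorem \ref{bridgecyclesthm}.

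The step that does the real work is the observation that $n-1$ bridges in an $n$-vertex graph automatically assemble into a spanning tree; this single fact simultaneously delivers connectivity of $\Gamma$ and the absence of any non-bridge edges, so there is no substantive obstacle beyond recognizing it. The entire statement is a bookkeeping corollary of Theorem \ref{bridgecyclesthm}.
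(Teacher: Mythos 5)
Your proof is correct and follows the same route the paper intends: both directions are pulled out of Theorem~\ref{bridgecyclesthm}, which is exactly how the paper frames the corollary (it offers no written proof, merely asserting that the theorem "suggests" it). Your treatment of the converse --- assembling the $n-1$ bridges into an acyclic spanning subgraph, invoking the forest edge-count to get a spanning tree, and then ruling out non-bridge edges via the fundamental cycle they would create --- supplies precisely the nontrivial detail the paper leaves implicit, and it does so correctly.
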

Since a $(n,\epsilon)$-graph is a tree if and only if $\epsilon=n-1$. Thus, we obtain:
\begin{proposition}\emph{\cite{West2001}}\label{connectedgraphwithcycles}
Assume $\G$ is an $(n,\epsilon)$-graph having at least one cycle. Then $\epsilon\geq n$.
\end{proposition}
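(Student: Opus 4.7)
The plan is to combine Corollary \ref{Tree_bridge} and Theorem \ref{bridgecyclesthm} with the standard spanning-tree edge bound. First I would note that the proposition is implicitly about connected graphs (as reflected in its label, and as is standard in the preliminaries on bridges and connectivity). Any connected graph on $n$ vertices contains a spanning tree, so immediately $\epsilon \geq n-1$. Thus the only case requiring work is to rule out $\epsilon = n - 1$.

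Next I would assume, for contradiction, that $\epsilon = n-1$. By the statement preceding the proposition (an $(n,\epsilon)$-graph is a tree iff $\epsilon = n-1$), this forces $\G$ to be a tree. Corollary \ref{Tree_bridge} then guarantees that every edge of $\G$ is a bridge. Applying Theorem \ref{bridgecyclesthm} to each edge, no edge of $\G$ is contained in any cycle, so $\G$ has no cycle at all. This contradicts the hypothesis that $\G$ contains at least one cycle, so $\epsilon \neq n-1$ and hence $\epsilon \geq n$.

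There is no real obstacle here; the entire argument is a one-line contrapositive built on top of the tree characterization already stated. The only subtle point worth flagging is the implicit assumption of connectedness, without which the claim fails (e.g. a triangle together with many isolated vertices). If one wanted to state and prove a fully general version, the argument would instead apply componentwise: a component containing a cycle satisfies $\epsilon_i \geq n_i$ by the connected case, while acyclic components satisfy $\epsilon_i = n_i - 1$, so the total edge count is at least $n$ minus the number of acyclic components, which does not give $\epsilon \geq n$ unconditionally. Hence the connectedness assumption is essential and the proof strategy above is the cleanest route.
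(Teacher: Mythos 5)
Your argument is correct and is essentially the same as the paper's (the paper merely remarks that an $(n,\epsilon)$-graph is a tree iff $\epsilon=n-1$ and cites West, leaving exactly the contrapositive you spell out). Your observation that connectedness is implicitly assumed and genuinely needed is accurate, but it does not change the fact that the route is identical.
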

The following result shows that antipodal vertices in a tree are the pendent vertices.
\begin{proposition}\emph{\cite{West2001}}\label{endvertex}
Let $x,y\in V(\G)$ be two antipodal vertices in a tree $T$. Then, $\mathrm{deg}_T(x)=\mathrm{deg}_T(y)=1$.
\end{proposition}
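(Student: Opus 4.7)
The plan is to argue by contradiction. Assume $\deg_T(x)\ge 2$; the goal is to produce a vertex $z\in V(T)$ with $d(z,y)>d(x,y)$, contradicting the maximality of $D(T)$ and the assumption that $x,y$ are antipodal.

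First, since $T$ is a tree, there is a unique $x$--$y$ geodesic $P$ of length $d(x,y)=D(T)$. Let $x'$ denote the neighbor of $x$ lying on $P$. Because $\deg_T(x)\ge 2$, I can pick a second neighbor $z$ of $x$ with $z\ne x'$. The key geometric claim is that every $z$--$y$ walk in $T$ must pass through $x$. Suppose otherwise: then there exists a $z$--$y$ path $Q$ in $T$ that avoids $x$. Concatenating the edge $xz$ with $Q$ produces an $x$--$y$ walk avoiding the interior of $P$ (except at the endpoint $y$), and comparing it with $P$ yields two internally distinct $x$--$y$ paths in $T$. This forces a cycle in $T$, contradicting Corollary \ref{Tree_bridge} (a tree has no cycles, since every edge is a bridge and by Theorem \ref{bridgecyclesthm} no bridge lies on a cycle).

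Consequently, the unique $z$--$y$ path in $T$ begins with the edge $zx$ and then follows $P$ from $x$ to $y$, giving
\[
d(z,y)=1+d(x,y)=1+D(T)>D(T),
\]
which contradicts the definition of $D(T)$ as the maximum distance in $T$. Therefore $\deg_T(x)=1$, and by the symmetric argument applied with the roles of $x$ and $y$ interchanged, $\deg_T(y)=1$ as well.

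The only delicate step is the one justifying that the $z$--$y$ path in $T$ is forced through $x$; this is essentially the observation that $x$ separates $z$ from $y$ in $T$, which in turn rests on the tree property (uniqueness of paths / absence of cycles). Everything else is a routine length comparison.
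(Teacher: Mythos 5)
Your proof is correct. The paper offers no proof of this proposition at all --- it is quoted as a standard fact from West's textbook --- so there is nothing to compare against; your argument (take a second neighbour $z$ of $x$, show the unique $z$--$y$ path must pass through $x$ by uniqueness of paths in a tree, and conclude $d(z,y)=d(x,y)+1>D(T)$) is exactly the standard one. One small imprecision worth fixing: the parenthetical claim that the concatenation $xz\cdot Q$ ``avoids the interior of $P$'' need not hold, since $Q$ may well re-enter $P$; but this is irrelevant to your argument, which only needs that $xz\cdot Q$ and $P$ are two \emph{distinct} $x$--$y$ paths (they differ in their first edge, as $z\neq x'$), and the union of two distinct paths with the same endpoints already contains a cycle, contradicting acyclicity of $T$.
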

The definition of the Sombor index of a graph suggests the following elementary observation.
\begin{lemma}\label{lem0}
For an edge-deleted subgraph $\G-xy$, where $xy\in E(\G)$, we have $SO(\G)>SO(\G-xy)$.
\end{lemma}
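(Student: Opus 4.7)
The plan is to compare $SO(\Gamma)$ and $SO(\Gamma-xy)$ term by term, taking advantage of the fact that removing a single edge affects degrees only at its two endpoints. I would partition the edge set $E(\Gamma)$ into four disjoint classes: (i) the edge $xy$ itself; (ii) edges $xu$ with $u \neq y$; (iii) edges $yv$ with $v \neq x$; and (iv) edges $ab$ with $\{a,b\}\cap\{x,y\}=\emptyset$. Then I would write $SO(\Gamma)$ as a sum over these four classes, and $SO(\Gamma-xy)$ as a sum over classes (ii)--(iv) only, with $\mathrm{deg}_\Gamma(x)$ replaced by $\mathrm{deg}_\Gamma(x)-1$ in (ii) and $\mathrm{deg}_\Gamma(y)$ replaced by $\mathrm{deg}_\Gamma(y)-1$ in (iii). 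Class (iv) contributes identically to both quantities and cancels.

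The resulting difference $SO(\Gamma)-SO(\Gamma-xy)$ is
\[
\sqrt{\mathrm{deg}_\Gamma(x)^2+\mathrm{deg}_\Gamma(y)^2}
+\sum_{\substack{u\sim x\\ u\ne y}}\Bigl[\sqrt{\mathrm{deg}_\Gamma(x)^2+\mathrm{deg}_\Gamma(u)^2}-\sqrt{(\mathrm{deg}_\Gamma(x)-1)^2+\mathrm{deg}_\Gamma(u)^2}\Bigr]
\]
plus the analogous sum over edges incident to $y$. The key observation is that the function $t\mapsto\sqrt{t^2+c^2}$ is strictly increasing on $[0,\infty)$ for any fixed $c\ge 0$, so each bracketed expression is strictly positive (or at least nonnegative when the corresponding sum is empty). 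Moreover, because $xy\in E(\Gamma)$, we have $\mathrm{deg}_\Gamma(x),\mathrm{deg}_\Gamma(y)\ge 1$, so the leading term $\sqrt{\mathrm{deg}_\Gamma(x)^2+\mathrm{deg}_\Gamma(y)^2}$ is already strictly positive by itself, which is enough to conclude strict inequality.

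There is essentially no obstacle here beyond bookkeeping. The only point that deserves a brief sentence of justification is the monotonicity of $t\mapsto\sqrt{t^2+c^2}$, which follows either by inspection of its derivative or simply by squaring. Isolated vertices (which could arise in $\Gamma-xy$ if $\mathrm{deg}_\Gamma(x)=1$ or $\mathrm{deg}_\Gamma(y)=1$) cause no trouble because they contribute nothing to either Sombor sum; the partition above remains valid as some of the sums (ii) or (iii) may simply be empty.
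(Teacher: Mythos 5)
Your proof is correct, and it is exactly the elementary degree-bookkeeping argument that the paper has in mind: the paper states this lemma without proof, calling it an observation that follows from the definition of the Sombor index. Your write-up simply supplies the details (the four-way edge partition, monotonicity of $t\mapsto\sqrt{t^2+c^2}$, and the strictly positive contribution of the deleted edge $xy$ itself), all of which are sound, including the handling of the degenerate cases where $x$ or $y$ becomes isolated.
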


Next, we prove some auxiliary results that are subsequently employed in the main results.

\section{Auxiliary Results}\label{auxualiary}

For the Sombor index of graphs within $\mathbb{B}_n^k$, to find a sharp upper bound, we would need
some auxiliary operation which we introduce herewith.

First, on a tree $T$, we introduce the $\tau$-operation.
Assume $T$ to be a tree comprising vertices $x_1, y_1,u_1, \dots, u_l\in V(T)$ where
$u_r u_{r-1}, u_r u_{r+1}, u_r x_1, u_r y_1 \in E(T)$ and $\mathrm{deg}_{T}(u_r) \geq 3$.
Let $T_0$ and $T_1$ be the subtrees identified to vertices $x_1$ and $y_1$, respectively.
Let $T_{\tau}$ be the tree obtained from $T$ by adding (resp. removing) the edge $u_1x_1$
(resp. $u_r x_1$). We call the tree $T_{\tau}$, the $\tau$-switched tree. See Figure \ref{Fig1}
for an application of the $\tau$-operation on the tree $T$.

\begin{figure}[htbp!]
\centering
\includegraphics[scale=1.2]{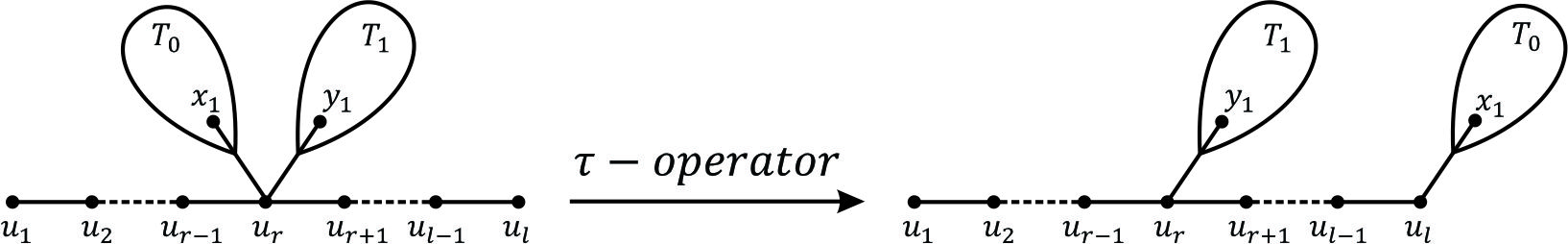}
\caption{Applying the $\tau$-operation on the tree $T$.}\label{Fig1}
\end{figure}

We conduct a comparison between Sombor indices of a tree $T$ and its $\tau$-switched tree i.e. $T_{\tau}$.
\begin{lemma}\label{lem00}
Assume $T_{\tau}$ to be a tree obtained by applying the $\tau$-operation on a tree $T$ as explained by
Figure \ref{Fig1}.
We have $$SO(T_{\tau})<SO(T).$$
\end{lemma}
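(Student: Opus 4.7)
The strategy is to reduce $SO(T)-SO(T_\tau)$ to a small combination of differences of the function $f(m,c):=\sqrt{m^{2}+c^{2}}$, and then deploy strict convexity of $m\mapsto f(m,c)$ to force positivity. First I set $p=\mathrm{deg}_T(u_1)$, $q=\mathrm{deg}_T(u_r)\geq 3$, and $s=\mathrm{deg}_T(x_1)$. Since $T_\tau$ is obtained from $T$ by deleting $u_r x_1$ and adding $u_1 x_1$, only the degrees of $u_1$ and $u_r$ are affected: $\mathrm{deg}_{T_\tau}(u_1)=p+1$ and $\mathrm{deg}_{T_\tau}(u_r)=q-1$. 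Consequently every edge not incident to $\{u_1,u_r\}$ contributes identically to $SO(T)$ and to $SO(T_\tau)$, and those contributions cancel.

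Set $A=N_T(u_1)$ and $B=N_T(u_r)\setminus\{x_1\}$, so that $y_1\in B$, $|A|=p$, and $|B|=q-1$. A direct book-keeping (with a small adjustment when $r=2$, where $u_1\in B$) gives
\[
SO(T)-SO(T_\tau)=\sum_{a\in A}\bigl[f(p,\mathrm{deg}(a))-f(p+1,\mathrm{deg}(a))\bigr]+\sum_{b\in B}\bigl[f(q,\mathrm{deg}(b))-f(q-1,\mathrm{deg}(b))\bigr]+f(q,s)-f(p+1,s).
\]
The first sum is negative while both of the remaining pieces are nonnegative term by term, so the job is to show that the positive part strictly dominates.

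To do this I exploit that $m\mapsto f(m,c)$ is strictly convex for each fixed $c>0$ (a one-line check gives $\partial_m^2 f = c^2/(m^2+c^2)^{3/2}>0$), hence the forward differences $f(m+1,c)-f(m,c)$ are strictly increasing in $m$. In the configuration drawn in Figure~\ref{Fig1}, $u_1$ is a pendant endpoint of the path and $u_r$ is adjacent to the four distinct vertices $u_{r-1},u_{r+1},x_1,y_1$, so $p=1$ and $q\geq 4$; in particular $q\geq p+2$. I then pair each of the $|A|=p$ negative terms with a term in the second sum and use convexity pointwise to dominate it; the remaining $|B|-|A|$ terms in the second sum, together with the strict inequality $f(q,s)-f(p+1,s)>0$ (which holds because $q\geq p+2$), provide strict slack and close the inequality $SO(T)>SO(T_\tau)$.

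The main obstacle is that convexity of $f$ controls only its first argument, while the terms being paired may involve different second arguments $\mathrm{deg}(a)$ and $\mathrm{deg}(b)$. The cleanest workaround uses the pendant status of $u_1$: then $A$ consists of the single internal path-vertex $u_2$ of degree $2$, so the only negative term is the explicit quantity $f(1,2)-f(2,2)=\sqrt{5}-2\sqrt{2}$, which is easily outweighed by the strictly positive contributions $f(q,t)-f(q-1,t)$ (with $t=\mathrm{deg}(y_1)$) and $f(q,s)-f(p+1,s)$, both of which grow with $q$. Should $u_1$ fail to be a pendant, the same idea still goes through by combining the inequality $|B|>|A|$ with the estimate $f(q,c)-f(q-1,c)\geq f(p+1,c)-f(p,c)$ supplied by convexity, applied term by term after matching common values of $c$ wherever possible.
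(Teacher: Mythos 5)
Your decomposition of $SO(T)-SO(T_\tau)$ into the three pieces indexed by $A=N_T(u_1)$, $B=N_T(u_r)\setminus\{x_1\}$ and the relocated edge is correct, and it is essentially the same edge-by-edge accounting the paper performs (the paper simply lists the changed edges and substitutes the degrees read off Figure~\ref{Fig1}, namely $\deg_T(u_r)=4$, path-neighbours of degree $2$ and a pendant end of degree $1$, arriving at $4\sqrt{20}+\sqrt5-3\sqrt{13}-2\sqrt8>0$). The genuine gap is in your final domination step. In the figure's configuration $p=1$ and $q=4$, the unique negative term is $f(1,2)-f(2,2)=\sqrt5-\sqrt8\approx-0.592$, and you claim it is ``easily outweighed'' by $f(q,t)-f(q-1,t)$ with $t=\deg(y_1)$ together with $f(q,s)-f(p+1,s)$ with $s=\deg(x_1)$. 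Neither of those quantities is bounded away from zero: for the fixed value $q=4$ one has $f(4,t)-f(3,t)=7/\bigl(\sqrt{16+t^2}+\sqrt{9+t^2}\bigr)\to 0$ and $f(4,s)-f(2,s)=12/\bigl(\sqrt{16+s^2}+\sqrt{4+s^2}\bigr)\to 0$ as $t,s\to\infty$, so when the subtrees $T_0,T_1$ are large stars these two terms sum to far less than $0.592$ and the inequality you cite fails. (The remark that these terms ``grow with $q$'' is moot, since $q=4$ is pinned down by the hypothesis $N_T(u_r)=\{u_{r-1},u_{r+1},x_1,y_1\}$.)

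The terms that actually rescue the inequality are the ones in your $B$-sum that you never invoke: $b=u_{r-1}$ and $b=u_{r+1}$ are path vertices of degree $2$ (or $1$ at an end of the path), so each contributes $f(4,2)-f(3,2)=\sqrt{20}-\sqrt{13}\approx 0.867$ (respectively $\sqrt{17}-\sqrt{10}\approx 0.961$), and a single such term already exceeds $\sqrt8-\sqrt5$. This is also exactly where your convexity device works as intended: the negative term and the $u_{r\pm1}$ terms share the \emph{same} second argument $c=2$, so $f(4,2)-f(3,2)\ge f(2,2)-f(1,2)$ follows from monotonicity of the forward differences of $m\mapsto f(m,c)$; pairing terms whose second arguments differ --- which is what your write-up ends up doing --- is precisely the obstacle you yourself identified, and convexity in the first argument does not resolve it. If you replace the $y_1/x_1$ pairing by the $u_{r\pm1}$ pairing (keeping your separate treatment of the degenerate case $r=2$), the argument closes and is in fact cleaner than the paper's numerical estimate; the fallback sentence for non-pendant $u_1$ can then be dropped, since the end of the path is pendant by Proposition~\ref{endvertex}.
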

\begin{proof}
Note that the tree $T$ consists of vertices $x_1, y_1,u_1, \dots, u_l\in V(T)$ where
$u_r x_1, u_r y_1, u_r u_{r-1}, u_r u_{r+1} \in E(T)$ and $\mathrm{deg}_{T}(u_r) \geq 3$.
The vertices $u_1$ and $u_l$ are two antipodal vertices of $T$. Proposition \ref{endvertex}
delivers that $\mathrm{deg}_{T}(u_l)=\mathrm{deg}_{T}(u_1)=1$. Let the neighborhood of $u_r\in V(T)$
be $N_T(u_r)=\{u_{r-1},u_{r+1},x_1,y_1\}$. The vertex $y_1$ (resp. $x_1$) is adjoined to the subtree
$T_1$ (resp. $T_0$). Figure \ref{Fig1} presents the tree $T$ and the $\tau$-operation applying on it.\\
		
The application of $\tau$-operation on $T$ suggests adding and edge $u_l x_1$ and removing $u_r x_1\in E(T)$.
This implies the shifting of $T_0$ from the vertex $x_1$ to $u_l$. 
Between $SO(T)$ and $SO(T_{\tau})$, this shifting suggests an emergence of the following
relation:
\begin{eqnarray*}
SO(T)-SO(T_{\tau})&=&\sqrt{\mathrm{deg}_T(u_r)^2 + \mathrm{deg}_T(u_{r-1})^2} + \sqrt{\mathrm{deg}_T(u_r)^2 + \mathrm{deg}_T(u_{r+1})^2}+\\
& & \sqrt{\mathrm{deg}_{T}(u_r)^2 + \mathrm{deg}_{T}(y_1)^2} + \sqrt{\mathrm{deg}_{T}(u_r)^2 + \mathrm{deg}_{T}(x_1)^2} + \\
& & \sqrt{\mathrm{deg}_{T}(u_{l-1})^2 + \mathrm{deg}_{T}(u_l)^2}-\sqrt{\mathrm{deg}_{T_{\tau}}(u_r)^2 + \mathrm{deg}_{T_{\tau}}(u_{r-1})^2}-\\
& & \sqrt{\mathrm{deg}_{T_{\tau}}(u_r)^2 + \mathrm{deg}_{T_{\tau}}(u_{r+1})^2} - \sqrt{\mathrm{deg}_{T_{\tau}}(u_r)^2 + \mathrm{deg}_{T_{\tau}}(y_1)^2}- \\
& & \sqrt{\mathrm{deg}_{T_{\tau}}(u_{l-1})^2 + \mathrm{deg}_{T_{\tau}}(u_l)^2} - \sqrt{\mathrm{deg}_{T_{\tau}}(u_l)^2 + \mathrm{deg}_{T_{\tau}}(x_1)^2},\\
&\geq& \sqrt{4^2 + 2^2} + \sqrt{4^2 + 2^2} + \sqrt{4^2 + 2^2} + \sqrt{4^2 + 2^2} + \sqrt{2^2 + 1^2} - \\
& & \sqrt{3^2 + 2^2} - \sqrt{3^2 + 2^2} - \sqrt{3^2 + 2^2} - \sqrt{2^2 + 2^2} - \sqrt{2^2 + 2^2},\\
&=& 4\sqrt{20} + \sqrt{5} -3\sqrt{13} - 2\sqrt{8}>0.
\end{eqnarray*}
This completes the proof.
\end{proof}
\wbull \\

Now we introduce the $\alpha$-operation by employing it to a $\G \in \mathbb{B}_n^k$. Assume that $\G$ 
comprises vertices $u_1,\ldots,u_r$, $v_1,\ldots,v_l$ and $w_1,\ldots,w_{k-1}$ such that $u_1,\ldots,u_r$ 
(resp. $v_1,\ldots,v_l$) forms a cycle $C_r$ (resp. $C_l$), $w_i$ ($1\leq i \leq k-1$) induces
a path, say, $P_{k-1}$, and $u_1w_1,v_1 w_{k-1}\in E(\G)$. Let 
$\G_{\alpha}=\G-\{u_1 u_2, v_1 v_2,v_1 v_l\}+\{u_2 v_2,u_1 v_l\}$.
The resulting graph $\G_{\alpha}$ obtained by applying the $\alpha$-operation
is called the $\alpha$-switched graph.
Figure \ref{Fig2} employs the $\alpha$-operation on $\G \in \mathbb{B}_n^k$.
	
\begin{figure}[htbp!]
\centering
\includegraphics[scale=0.9]{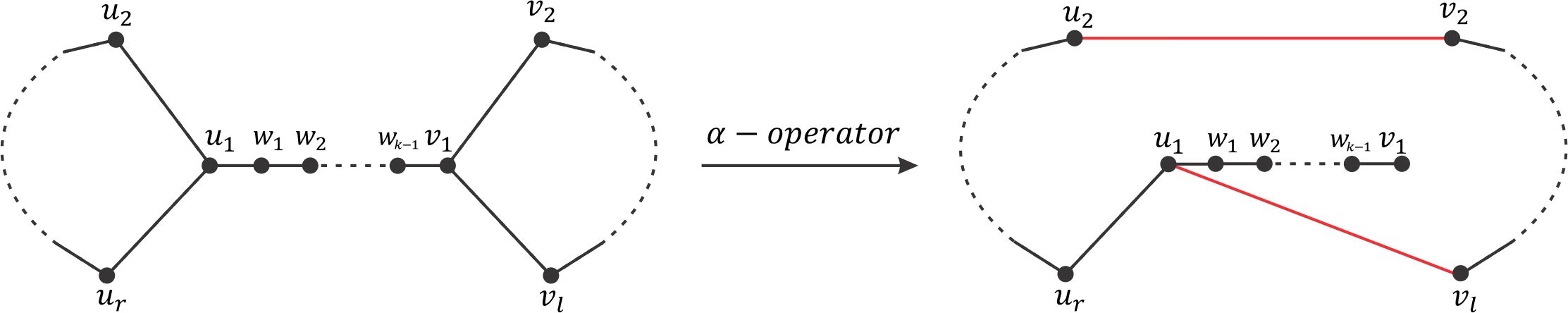}
\caption{Application of the $\alpha$-operation on $\G \in \mathbb{B}_n^k$.
It has been used in Lemma \ref{lemmaalpha}.}\label{Fig2}
\end{figure}

\begin{lemma}\label{lemmaalpha}
Assume $\G_{\alpha}$ is obtained by applying the $\alpha$-operation on $\G\in\mathbb{B}_n^k$ as explained in Figure \ref{Fig2}.
We have $$SO(\G_{\alpha})<SO(\G).$$
\end{lemma}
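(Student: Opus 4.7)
The plan is to compute $SO(\G) - SO(\G_{\alpha})$ directly, exploiting the fact that the $\alpha$-operation modifies the degree of only one vertex. First I would identify the degrees of all vertices incident to the removed or added edges. A quick inspection of Figure \ref{Fig2} shows that in $\G$ we have $\mathrm{deg}_\G(u_1) = \mathrm{deg}_\G(v_1) = 3$ and $\mathrm{deg}_\G(u_2) = \mathrm{deg}_\G(v_2) = \mathrm{deg}_\G(v_l) = \mathrm{deg}_\G(w_{k-1}) = 2$, while in $\G_{\alpha}$ exactly one of these changes: $\mathrm{deg}_{\G_{\alpha}}(v_1) = 1$, because removing both $v_1 v_2$ and $v_1 v_l$ leaves $v_1$ adjacent only to $w_{k-1}$. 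The other five listed vertices each lose exactly one incident edge and gain exactly one incident edge, so their degrees are preserved.

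With the degree bookkeeping settled, every edge outside $\{u_1 u_2, v_1 v_2, v_1 v_l, u_2 v_2, u_1 v_l\}$ that is not incident to $v_1$ contributes equally to $SO(\G)$ and $SO(\G_\alpha)$. The only edge incident to $v_1$ that survives in $\G_{\alpha}$ is $v_1 w_{k-1}$, and its contribution drops from $\sqrt{3^2+2^2}$ to $\sqrt{1^2+2^2}$. Hence the whole comparison reduces to the six affected edges, giving
\begin{equation*}
SO(\G) - SO(\G_{\alpha}) = 4\sqrt{13} - \bigl(\sqrt{13} + 2\sqrt{2} + \sqrt{5}\bigr) = 3\sqrt{13} - 2\sqrt{2} - \sqrt{5},
\end{equation*}
and a direct numerical comparison confirms $3\sqrt{13} > 2\sqrt{2} + \sqrt{5}$, yielding $SO(\G) > SO(\G_{\alpha})$ as required.

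The main obstacle is really just careful bookkeeping rather than any genuine analytic difficulty: one must verify that no vertex besides $v_1$ has its degree altered and that each affected edge is counted exactly once on each side. A minor supporting observation that I would record before the numerical computation is that $\G_{\alpha}$ still belongs to $\mathbb{B}_n^k$ — the two cycles $C_r$ and $C_l$ merge into a single $(r+l-1)$-cycle through the new edges $u_2 v_2$ and $u_1 v_l$, the vertex $v_1$ becomes a pendant attached to $w_{k-1}$, and the total bridge count stays at $k$, consisting of the edges $u_1 w_1$, $w_1 w_2, \ldots, w_{k-2} w_{k-1}$, and $w_{k-1} v_1$, so the operation indeed keeps $\G_\alpha$ inside the class under study.
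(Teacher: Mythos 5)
Your proposal is correct and follows essentially the same route as the paper: a direct edge-by-edge comparison of the two Sombor indices over the affected edges, reducing to the quantity $3\sqrt{13}-\sqrt{8}-\sqrt{5}>0$ (the paper writes $\sqrt{8}$ where you write $2\sqrt{2}$). Your bookkeeping is in fact slightly cleaner, since you observe that only $v_1$ changes degree and so omit the terms for $u_1u_r$ and $u_1w_1$ that the paper carries on both sides and cancels.
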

\begin{proof}
Notice that the graph $\G$ consists of two cycles, $C_r = u_1 u_2 \dots u_r$ and $C_l = v_1 v_2 \dots u_l$ and
a path $P_{k-1} = (w_1, w_2, \dots , w_{k-1})$ such that $u_1 \sim w_1$ and $w_{k-1} \sim v_1$. Let
$\G_\alpha$ be a graph obtained from $\G$ by an addition of edges $u_2 v_2$  and  $u_1 v_l$ and
a deletion of edges $u_1 u_2, v_1 v_2$ and $v_1 v_l$. 
Between $SO(\G)$ and $SO(\G_{\alpha})$, this shifting suggests an emergence of the following
relation:
\begin{eqnarray*}
SO(\G)-SO(\G_\alpha) &=& \sqrt{{\mathrm{deg}_{\G}(u_1)}^2 + {\mathrm{deg}_{\G}(u_2)}^2} + \sqrt{{\mathrm{deg}_{\G}(u_1)}^2 + {\mathrm{deg}_{\G}(u_r)}^2} + \sqrt{{\mathrm{deg}_{\G}(u_1)}^2 + {\mathrm{deg}_{\G}(w_1)}^2} +\\
&& \sqrt{{\mathrm{deg}_{\G}(w_{k-1})}^2 + {\mathrm{deg}_{\G}(v_1)}^2} + \sqrt{{\mathrm{deg}_{\G}(v_1)}^2 + {\mathrm{deg}_{\G}(v_2)}^2} + \sqrt{{\mathrm{deg}_{\G}(v_1)}^2 + {\mathrm{deg}_{\G}(v_l)}^2} -\\
&& \sqrt{{\mathrm{deg}_{\G_\alpha}(u_2)}^2 + {\mathrm{deg}_{\G_\alpha}(v_2)}^2} - \sqrt{{\mathrm{deg}_{\G_\alpha}(u_1)}^2 + {\mathrm{deg}_{\G_\alpha}(v_l)}^2} -\\
&& \sqrt{{\mathrm{deg}_{\G_\alpha}(u_1)}^2 + {\mathrm{deg}_{\G_\alpha}(u_r)}^2} - \sqrt{{\mathrm{deg}_{\G_\alpha}(u_1)}^2 + {\mathrm{deg}_{\G_\alpha}(w_1)}^2} -\\
&& \sqrt{{\mathrm{deg}_{\G_\alpha}(w_{k-1})}^2 + {\mathrm{deg}_{\G_\alpha}(v_1)}^2} \\
&\geq& \sqrt{3^2 + 2^2} + \sqrt{3^2 + 2^2} + \sqrt{3^2 + 2^2} + \sqrt{3^2 + 2^2} + \sqrt{3^2 + 2^2} + \sqrt{3^2 + 2^2} -\\
&& \sqrt{2^2 + 2^2} - \sqrt{3^2 + 2^2} - \sqrt{3^2 + 2^2} - \sqrt{3^2 + 2^2} - \sqrt{2^2 + 1^2} \\
&=& 3 \sqrt{13} - \sqrt{8} - \sqrt{5} > 0
\end{eqnarray*}
This completes the proof.
\end{proof}
\wbull \\

Now we introduce the $\beta$-operation by employing it to a $\G \in \mathbb{B}_n^k$.
Assume that $\G$ comprises vertices $u_1,\ldots,u_r$, $v_1,\ldots,v_l$
and $w_1,\ldots,w_\xi$, such that $l+\xi=k$ and $u_1,\ldots,u_r$ generate a cycle $C_r$, $v_i$
and $w_j$ ($1\leq i \leq l$ and $1\leq j \leq \xi$) induces paths, say, $P_l$ and $P_\xi$.
Let $\G_{\beta}=\G- u_2 v_1 + w_\xi v_1$. In this way, the resulting graph $\G_{\beta}$ 
obtained by applying the $\beta$-operation will be called the $\beta$-switched graph.
Figure \ref{Fig3} employs the $\beta$-operation on $\G \in \mathbb{B}_n^k$.
	
\begin{figure}[htbp!]
\centering
\includegraphics[scale=0.9]{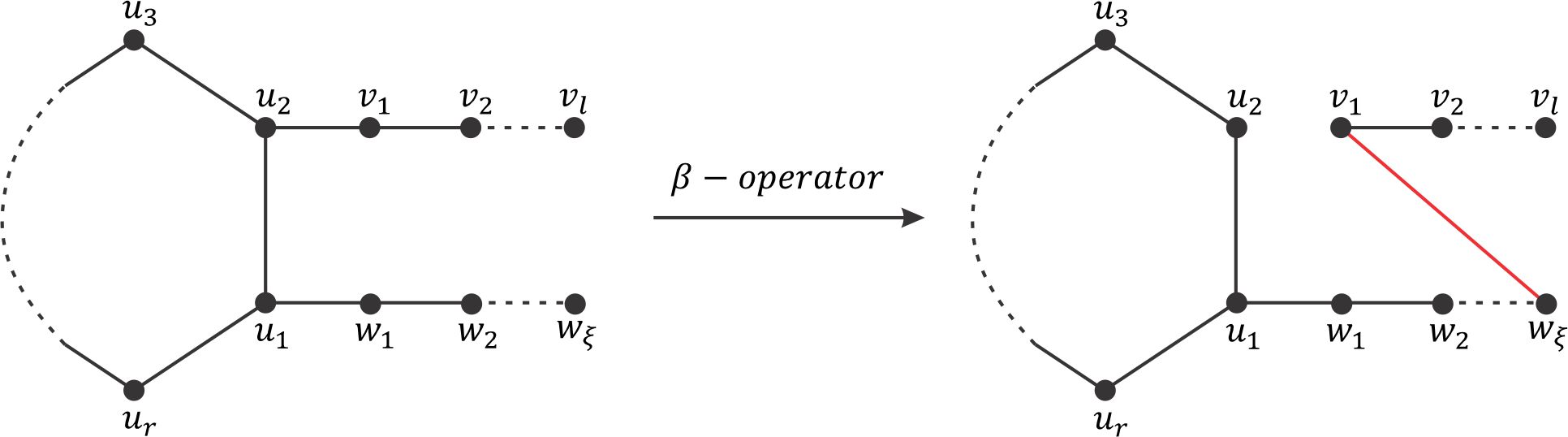}
\caption{Application of the $\beta$-operation on $\G \in \mathbb{B}_n^k$. It has been used in Lemma \ref{lemmabeta}.}\label{Fig3}
\end{figure}

\begin{lemma}\label{lemmabeta}
Assume $\G_{\beta}$ is obtained by applying the $\beta$-operation on $\G\in\mathbb{B}_n^k$ as explained in Figure \ref{Fig3}.
We obtain $$SO(\G_{\beta})<SO(\G).$$
\end{lemma}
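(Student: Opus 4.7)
The plan is to mimic the pattern of the preceding two lemmas: isolate the edges of $\G$ whose Sombor contribution is altered by the $\beta$-operation, bound the relevant degrees from below by the minimum permitted values, and show that the resulting numerical inequality is strict. The key observation is that the $\beta$-operation is a very local modification: only the degrees of $u_2$ (drops from $3$ to $2$) and $w_\xi$ (rises from $1$ to $2$) change, while $v_1$'s degree is preserved (the edge incident to it is merely re-routed from $u_2$ to $w_\xi$). Hence the only edges whose contribution to the Sombor sum is affected are the four edges of $\G$ incident to $u_2$ or $w_\xi$ — namely $u_1u_2$, $u_2u_3$, $u_2v_1$, and $w_{\xi-1}w_\xi$ — together with the single new edge $w_\xi v_1$ in $\G_\beta$.

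In the generic case ($r\geq 3$, $l\geq 2$, $\xi\geq 2$), the structural constraints from $\G\in\mathbb{B}_n^k$ give $\mathrm{deg}_\G(u_1)=3$, $\mathrm{deg}_\G(u_2)=3$, $\mathrm{deg}_\G(u_3)\geq 2$, $\mathrm{deg}_\G(v_1)=2$, $\mathrm{deg}_\G(w_{\xi-1})=2$, $\mathrm{deg}_\G(w_\xi)=1$, and after the operation $\mathrm{deg}_{\G_\beta}(u_2)=2$, $\mathrm{deg}_{\G_\beta}(w_\xi)=2$. I would then write down
\begin{eqnarray*}
SO(\G)-SO(\G_\beta) &=& \sqrt{\mathrm{deg}_\G(u_1)^2+\mathrm{deg}_\G(u_2)^2} + \sqrt{\mathrm{deg}_\G(u_2)^2+\mathrm{deg}_\G(u_3)^2} \\
&& {}+ \sqrt{\mathrm{deg}_\G(u_2)^2+\mathrm{deg}_\G(v_1)^2} + \sqrt{\mathrm{deg}_\G(w_{\xi-1})^2+\mathrm{deg}_\G(w_\xi)^2} \\
&& {}- \sqrt{\mathrm{deg}_{\G_\beta}(u_1)^2+\mathrm{deg}_{\G_\beta}(u_2)^2} - \sqrt{\mathrm{deg}_{\G_\beta}(u_2)^2+\mathrm{deg}_{\G_\beta}(u_3)^2} \\
&& {}- \sqrt{\mathrm{deg}_{\G_\beta}(w_{\xi-1})^2+\mathrm{deg}_{\G_\beta}(w_\xi)^2} - \sqrt{\mathrm{deg}_{\G_\beta}(w_\xi)^2+\mathrm{deg}_{\G_\beta}(v_1)^2}
\end{eqnarray*}
and minorize by taking $\mathrm{deg}_\G(u_3)=2$, its smallest possible value, to obtain
\[
SO(\G)-SO(\G_\beta) \;\geq\; \sqrt{18}+\sqrt{13}+\sqrt{13}+\sqrt{5} - \sqrt{13}-\sqrt{8}-\sqrt{8}-\sqrt{8} \;=\; \sqrt{13}+\sqrt{5}-3\sqrt{2},
\]
and a direct comparison ($\sqrt{13}+\sqrt{5}>3.6+2.2 = 5.8 > 3\sqrt{2}$) confirms positivity.

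The main obstacle I anticipate is the handling of the degenerate cases in which one of the two paths attached to the cycle is very short: specifically $\xi=1$ (in which $w_\xi=w_1$ is directly adjacent to the cycle, so the edge $w_{\xi-1}w_\xi$ does not exist and a cycle-incident edge must be tracked instead) and $l=1$ (in which $v_1$ has degree $1$ rather than $2$ both before and after). Each of these boundary cases alters the list of edges contributing to the telescoping sum, so I would dispatch them as small separate computations, each producing an even more favorable numerical gap than the generic case. Since in every scenario the degrees on the $\G$ side are no smaller than on the $\G_\beta$ side and at least one edge gains a strictly larger endpoint, the bound remains strict, completing the proof.
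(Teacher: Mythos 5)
Your proposal is correct and follows essentially the same route as the paper's own proof: you isolate exactly the same four affected edges of $\G$ (plus the new edge $w_\xi v_1$ of $\G_\beta$), use the same degree values, and arrive at the identical bound $\sqrt{13}+\sqrt{5}-3\sqrt{2}>0$. Your explicit attention to the degenerate cases $\xi=1$ and $l=1$ (which the paper's proof silently assumes away, since it uses $w_{\xi-1}$ and $\mathrm{deg}_\G(v_1)=2$) is a welcome extra; both cases indeed yield the even larger gap $\sqrt{10}-\sqrt{2}>0$.
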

\begin{proof}
The graph $\G$ has exactly one cycle $C_r = u_1 u_2 \dots u_r$ and two paths
$P_l = (v_1, v_2, \dots , v_l)$ and $P_\xi = (w_1, w_2, \dots , w_\xi)$, such
that $u_1 \sim w_1$ and $u_2 \sim v_1$. The graph $\G_\beta$ is obtained from
the graph $\G$ by deleting an edge $u_2 v_1$ and adding an edge $w_\xi v_1$. 
Between $SO(\G)$ and $SO(\G_{\beta})$, this shifting of edges generates 
the following relation:
\begin{eqnarray*}
SO(\G)-SO(\G_\beta) &=&  \sqrt{{\mathrm{deg}_{\G}(u_1)}^2 + {\mathrm{deg}_{\G}(u_2)}^2} + \sqrt{{\mathrm{deg}_{\G}(u_2)}^2 + {\mathrm{deg}_{\G}(u_3)}^2} + \sqrt{{\mathrm{deg}_{\G}(u_2)}^2 + {\mathrm{deg}_{\G}(v_1)}^2} +\\
&& \sqrt{{\mathrm{deg}_{\G}(w_{\xi-1})}^2 + {\mathrm{deg}_{\G}(w_\xi)}^2} - \sqrt{{\mathrm{deg}_{\G_\beta}(u_1)}^2 + {\mathrm{deg}_{\G_\beta}(u_2)}^2} -\\
&&\sqrt{{\mathrm{deg}_{\G_\beta}(u_2)}^2 + {\mathrm{deg}_{\G_\beta}(u_3)}^2} - \sqrt{{\mathrm{deg}_{\G_\beta}(w_\xi)}^2 + {\mathrm{deg}_{\G_\beta}(v_1)}^2} -\\
&& \sqrt{{\mathrm{deg}_{\G_\beta}(w_{\xi-1})}^2 + {\mathrm{deg}_{\G_\beta}(w_\xi)}^2} \\		
&\geq& \sqrt{3^2 + 3^2} + \sqrt{3^2 + 2^2} + \sqrt{3^2 + 2^2} + \sqrt{2^2 + 1^2} -\\
&& \sqrt{3^2 + 2^2} - \sqrt{2^2 + 2^2} - \sqrt{2^2 + 2^2} - \sqrt{2^2 + 2^2} \\
&=& \sqrt{13} + \sqrt{5} - 3\sqrt{2} > 0
\end{eqnarray*}
This completes the proof.		
\end{proof}
\wbull \\

Now we elaborate the $\upgamma$-operation by employing it to a $\G \in \mathbb{B}_n^k$.
Roughly speaking, the $\upgamma$-operator takes a graph having nested cycles, and transforms it into a unicyclic graph.
Assume that $\G$ comprises vertices $u_1,\ldots,u_l$, $v_{r+1}, v_{r+2}, \ldots,v_m$.
Any of these vertices could contain a subgraph attached to it.
Let $\G_{\upgamma}=\G- \{u_{r-1} u_r + u_1 u_l \}+ u_{r-1} u_l$. 
In this way, the resulting graph $\G_{\upgamma}$
obtained by applying the $\upgamma$-operation will be called the $\upgamma$-switched graph.
Figure \ref{Fig4} employs the $\upgamma$-operation on $\G \in \mathbb{B}_n^k$.
	
\begin{figure}[htbp!]
\centering
\includegraphics[scale=0.60]{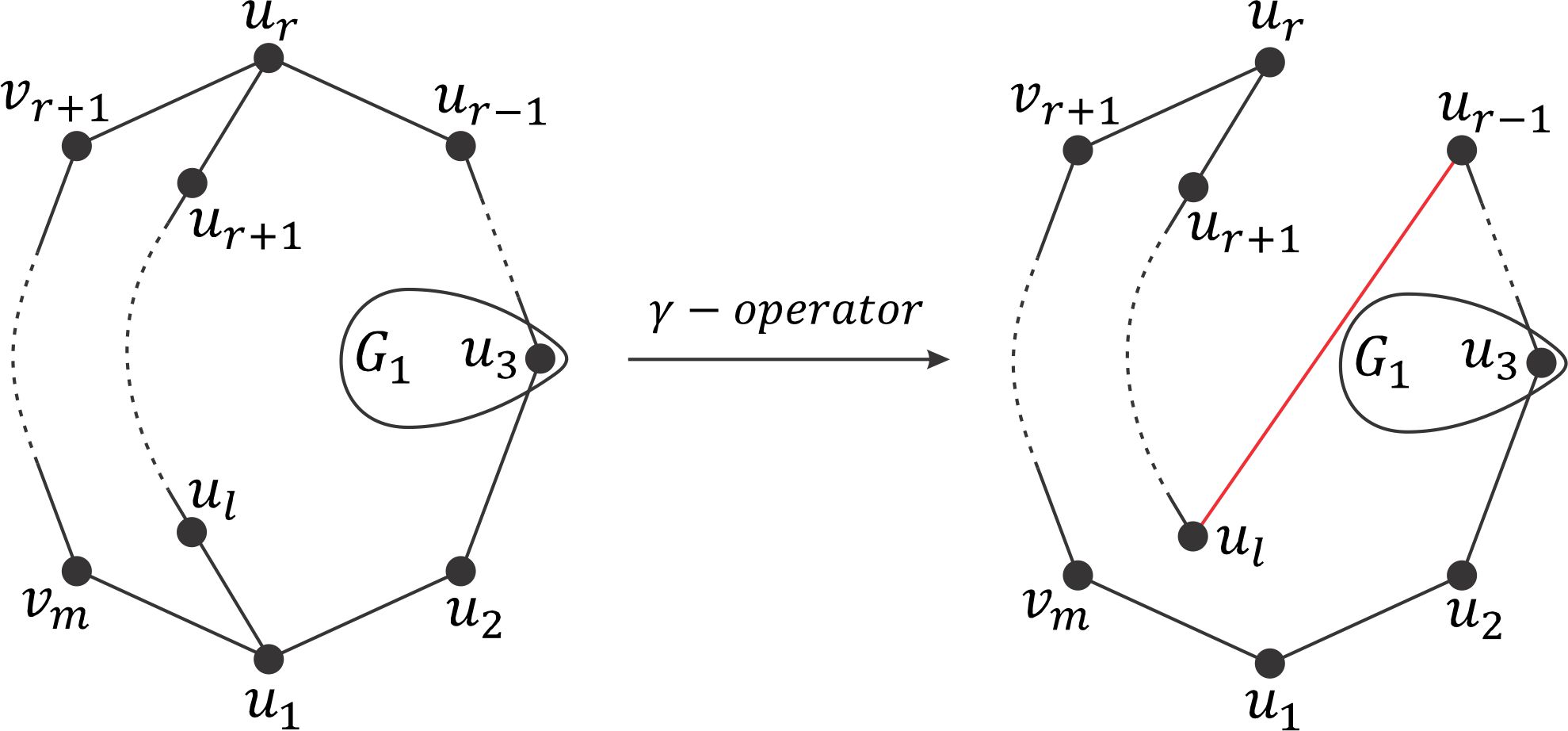}
\caption{Application of the $\upgamma$-operation on $\G \in \mathbb{B}_n^k$. It has been used in Lemma \ref{lemmagamma}.}\label{Fig4}
\end{figure}
	
\begin{lemma}\label{lemmagamma}
Assume $\G_\upgamma$ is obtained by applying the $\upgamma$-operation on $\G\in\mathbb{B}_n^k$ as explained in Figure \ref{Fig4}.
We obtain $$SO(\G_{\upgamma}) < SO(\G).$$
\end{lemma}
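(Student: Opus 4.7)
The plan is to follow the exact same blueprint used successfully for Lemmas \ref{lem00}, \ref{lemmaalpha} and \ref{lemmabeta}: express $SO(\G)-SO(\G_\upgamma)$ as a telescoping sum over the (finitely many) edges whose contribution differs between $\G$ and $\G_\upgamma$, then bound each difference below using the minimum possible vertex degrees that are forced by the hypothesis $\G\in\mathbb{B}_n^k$ and by the nested-cycle configuration in Figure \ref{Fig4}, and finally verify numerically that the resulting constant is strictly positive.

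First I would isolate the local degree changes. The $\upgamma$-operation deletes the two edges $u_{r-1}u_r$ and $u_1u_l$ and inserts the single edge $u_{r-1}u_l$. Consequently $\mathrm{deg}_{\G_\upgamma}(u_{r-1})=\mathrm{deg}_\G(u_{r-1})$ (it swaps the neighbour $u_r$ for $u_l$) and $\mathrm{deg}_{\G_\upgamma}(u_l)=\mathrm{deg}_\G(u_l)$ (it swaps the neighbour $u_1$ for $u_{r-1}$), whereas $\mathrm{deg}_{\G_\upgamma}(u_r)=\mathrm{deg}_\G(u_r)-1$ and $\mathrm{deg}_{\G_\upgamma}(u_1)=\mathrm{deg}_\G(u_1)-1$. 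All other vertices keep their degrees, so the only $SO$-contributions that change are (i) the two deleted edges and the one added edge, and (ii) the edges incident to $u_r$ or $u_1$ other than the deleted ones.

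Next I would write $SO(\G)-SO(\G_\upgamma)$ as the corresponding sum of $\sqrt{\cdot}$ terms, grouping the contributions into ``edges lost'' and ``edges gained'' and separating the edges whose endpoint-degree at $u_1$ or $u_r$ decreased by one. Using that (by the nested-cycle picture) $u_{r-1}$ and $u_l$ lie on at least two cycles and therefore have $\mathrm{deg}_\G\geq 3$, while the remaining cycle vertices satisfy $\mathrm{deg}_\G\geq 2$, I would substitute these minimal values into each radical. Exactly as in the proofs of Lemmas \ref{lem00}--\ref{lemmabeta}, this reduces the problem to checking a finite arithmetic inequality of the form $a\sqrt{13}+b\sqrt{5}-c\sqrt{8}-\ldots >0$ for small positive integers $a,b,c,\ldots$, which is a direct numerical verification.

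The main obstacle is the bookkeeping of edges incident to $u_1$ and $u_r$ whose \emph{degree of the other endpoint} may or may not lie in an attached subgraph. Because ``any of these vertices could contain a subgraph attached to it,'' one must show monotonicity in those neighbouring degrees, i.e.\ that the function $f(d)=\sqrt{d^2+D^2}-\sqrt{d^2+(D-1)^2}$ is increasing in the companion degree so that replacing the unknown neighbour degrees by their smallest admissible values only strengthens the inequality. Once this monotonicity observation is in place, the worst case collapses to the configuration drawn in Figure \ref{Fig4}, the explicit numerical bound follows, and the strict inequality $SO(\G_\upgamma)<SO(\G)$ is obtained.
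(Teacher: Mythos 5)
Your overall strategy---isolate the finitely many edges whose Sombor contribution changes, substitute worst-case degrees, and verify a numerical inequality---is exactly the paper's, and your bookkeeping of \emph{which} degrees change is correct: $\mathrm{deg}(u_1)$ and $\mathrm{deg}(u_r)$ each drop by one, while $\mathrm{deg}(u_{r-1})$ and $\mathrm{deg}(u_l)$ are preserved. The gap is in the next step, where you assign the forced minimal degrees. In the configuration of Figure \ref{Fig4} the two cycles share the path $u_1u_2\cdots u_r$, so the branch vertices of degree at least $3$ are $u_1$ (adjacent to $u_2$, $u_l$, $v_m$) and $u_r$ (adjacent to $u_{r-1}$, $u_{r+1}$, $v_{r+1}$); the vertices $u_{r-1}$ and $u_l$ are ordinary cycle vertices of degree $2$ in the minimal case, and $u_l$ in fact lies on only one of the two cycles. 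Your inference ``lies on at least two cycles, therefore $\mathrm{deg}_\G\geq 3$'' is also false in itself: every internal vertex of the shared path lies on two cycles yet has degree $2$. This mix-up is not cosmetic. The reason the $\upgamma$-operation lowers $SO$ is precisely that it reduces the two degree-$\geq 3$ vertices $u_1,u_r$ to degree $\geq 2$, which is what produces the paper's estimate $6\sqrt{13}-5\sqrt{8}>0$; under your assignment ($u_1,u_r$ of degree $2$ dropping to $1$) the operation would create pendant vertices and hence new bridges, which is not the configuration being analysed at all.

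A second, smaller problem is the monotonicity step. For a retained edge at $u_1$ (or $u_r$) whose other endpoint has degree $d$, the paired contribution difference is $f(d)=\sqrt{d^2+D^2}-\sqrt{d^2+(D-1)^2}=(2D-1)/\bigl(\sqrt{d^2+D^2}+\sqrt{d^2+(D-1)^2}\bigr)$, which is \emph{decreasing} in $d$, not increasing; substituting the smallest admissible $d$ therefore yields an upper bound on $f(d)$, not the lower bound your plan requires. What saves the argument is simply that $f(d)>0$ for every $d$, so these paired terms may be discarded, and the remaining part
$\sqrt{\mathrm{deg}_\G(u_1)^2+\mathrm{deg}_\G(u_l)^2}+\sqrt{\mathrm{deg}_\G(u_r)^2+\mathrm{deg}_\G(u_{r-1})^2}-\sqrt{\mathrm{deg}_{\G_\upgamma}(u_{r-1})^2+\mathrm{deg}_{\G_\upgamma}(u_l)^2}$
(two deleted edges against one added edge) is positive by $\sqrt{a^2+x^2}+\sqrt{b^2+y^2}>x+y\geq\sqrt{x^2+y^2}$. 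With the degree roles corrected and the monotonicity claim replaced by this positivity observation, your plan does go through---and in fact more rigorously than the paper's single displayed estimate, which tacitly bounds the negative terms from above by substituting minimal degrees into them.
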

\begin{proof}
Without loss of generality, we may assume that $\G$ comprises two nested cycles, i.e. a bicyclic graph.
Figure \ref{Fig4} depicts such a case. The $\upgamma-$operator converts the bicyclic graph $\G$ into
the unicyclic $\G_\upgamma$. Between $SO(\G)$ and $SO(\G_{\upgamma})$, this shifting of edges generates
the following relation:
\begin{eqnarray*}
SO(\G)-SO(\G_\upgamma) &=&  \sqrt{{\mathrm{deg}_{\G}(u_1)}^2 + {\mathrm{deg}_{\G}(u_2)}^2} +
\sqrt{{\mathrm{deg}_{\G}(u_1)}^2 + {\mathrm{deg}_{\G}(u_l)}^2} +\\
&& \sqrt{{\mathrm{deg}_{\G}(u_1)}^2 + {\mathrm{deg}_{\G}(v_m)}^2} +
\sqrt{{\mathrm{deg}_{\G}(u_r)}^2 + {\mathrm{deg}_{\G}(u_{r-1})}^2} +\\
&& \sqrt{{\mathrm{deg}_{\G}(u_r)}^2 + {\mathrm{deg}_{\G}(u_{r+1})}^2} +
\sqrt{{\mathrm{deg}_{\G}(u_r)}^2 + {\mathrm{deg}_{\G}(v_{r+1})}^2} -\\
&& \sqrt{{\mathrm{deg}_{\G_\upgamma}(u_1)}^2 + {\mathrm{deg}_{\G_\upgamma}(u_2)}^2} -
\sqrt{{\mathrm{deg}_{\G_\upgamma}(u_1)}^2 + {\mathrm{deg}_{\G_\upgamma}(v_m)}^2} -\\
&& \sqrt{{\mathrm{deg}_{\G_\upgamma}(u_r)}^2 + {\mathrm{deg}_{\G_\upgamma}(u_{r+1})}^2} -
\sqrt{{\mathrm{deg}_{\G_\upgamma}(u_r)}^2 + {\mathrm{deg}_{\G_\upgamma}(v_{r+1})}^2} -\\
&& \sqrt{{\mathrm{deg}_{\G_\upgamma}(u_l)}^2 + {\mathrm{deg}_{\G_\upgamma}(u_{r-1})}^2} \\
&\geq& 6 \sqrt{3^2 + 2^2} - 5 \sqrt{2^2 + 2^2} > 0
\end{eqnarray*}
This completes the proof.
\end{proof}
\wbull \\

Next lemma gives some information about the structure of a graph $\G \in \mathbb{B}_n^k$ achieving a minimum Sombor index.
\begin{lemma}\label{uniquecycle}
Let $\G \in \mathbb{G}_n^{k}$ such that $k<n-1$ and $SO(\G) \leq SO(\G')$ for any $\G' \in \mathbb{B}_n^k$. Then, $\G$ has a unique cycle of length $n-k$.
\end{lemma}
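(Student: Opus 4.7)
The plan is to split the statement into two assertions: first, that $\G$ is unicyclic, and second, that its unique cycle has length $n-k$. That $\G$ contains at least one cycle is forced by the hypothesis $k<n-1$: Corollary \ref{Tree_bridge} rules out $\G$ being a tree, so by Proposition \ref{connectedgraphwithcycles} a cycle must be present. The substantive content is then ruling out a second cycle and pinning down the length.

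For uniqueness I would argue by contradiction. Suppose $\G$ contains two distinct cycles, and split by block structure. If some block of $\G$ houses more than one cycle, I extract a nested-cycle pair matching the template of Figure \ref{Fig4}; Lemma \ref{lemmagamma} then supplies a graph $\G_{\upgamma}\in\mathbb{B}_n^k$ with $SO(\G_{\upgamma})<SO(\G)$. Otherwise every cyclic block is a single cycle, and since we have at least two cycles, at least two cyclic blocks exist. Picking a pair at minimum distance in the block-cut tree of $\G$ ensures the connecting walk in $\G$ crosses only bridge-blocks (any intermediate cyclic block would contradict minimality), which places the configuration into the $\alpha$-template of Figure \ref{Fig2}, and Lemma \ref{lemmaalpha} provides a graph $\G_{\alpha}\in\mathbb{B}_n^k$ with strictly smaller Sombor index. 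Both cases contradict the minimality of $SO(\G)$, so $\G$ has exactly one cycle.

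Once uniqueness is established, the length computation is immediate. A connected graph with exactly one cycle is unicyclic, so $|E(\G)|=n$; by Theorem \ref{bridgecyclesthm} the bridges are precisely those edges lying on no cycle. Thus if the unique cycle has length $c$, the bridge count is $n-c$, and equating with $k$ yields $c=n-k$.

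In my view the main obstacle is the first part of the case analysis: forcing an arbitrary multi-cycle $\G\in\mathbb{B}_n^k$ into one of the two precise operation templates. One must argue that the degree lower bounds used in the proofs of Lemmas \ref{lemmaalpha} and \ref{lemmagamma} remain valid in the more general surroundings (intuitively, extra pendant or attached subtrees only raise the relevant degrees, enlarging the Sombor gap rather than shrinking it). A companion subtlety is the bookkeeping that certifies the switched graph truly lies in $\mathbb{B}_n^k$, i.e.\ that it has $n$ vertices and exactly $k$ bridges; this comes down to a direct edge-by-edge verification using Theorem \ref{bridgecyclesthm}. The rest of the argument—existence of a cycle, and translating the cycle length into the bridge count—is routine.
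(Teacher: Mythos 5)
Your proposal is correct and follows essentially the same route as the paper: rule out overlapping cycles via the $\upgamma$-operation (Lemma \ref{lemmagamma}), merge the remaining edge-disjoint cycles via the $\alpha$-operation (Lemma \ref{lemmaalpha}), and then deduce the cycle length $n-k$ by counting the $k$ bridges off the unique cycle using Theorem \ref{bridgecyclesthm}. Your block-cut-tree framing and your explicit flagging of the template-matching and bridge-count-preservation issues are just a more careful articulation of the same two-step reduction the paper performs.
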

\begin{proof}
Since $\G$ is not a tree, we obtain that it has at least one cycle. Next, we show that $\G$ is unicyclic.
Suppose, on contrary, that $\G$ has more than one cycle. First we show that all these cycles are edge-disjoint.
\begin{claim}\label{claim1}
All cycles in $\G$ are edge-disjoint.
\end{claim}
\begin{proof}
Assume $\G$ has cycles $C_1, C_2, \dots, C_s$ for $s \geq 2$ which share some edges. Then a successive
application of Lemma \ref{lemmagamma} implies that there exists $\G'$ implying $SO(\G)>SO(\G')$.
This contradicts the choice of $\G$ which, in turn, shows the claim.
\end{proof}
\wbull \\

By Claim \ref{claim1}, we conclude that all cycles in $\G$ are edge-disjoint.
Next, by successive application of Lemma \ref{lemmaalpha}, there exists
a graph $\G''$ implying $SO(\G)>SO(\G'')$. Thus, it again contradicts the choice of $\G$,
which shows that $\G$ is unicyclic, say, with cycle $C$.\\

Since $G$ is connected and has $k$ bridges, this implies by Theorem \ref{bridgecyclesthm} that
there are exactly $k$ edges which do not lies on $C$. This leads us to conclude that $C$ has length $n-k$.
\end{proof}
\wbull \\

Finally, we elaborate the $\delta$-operation by employing it to a $\G \in \mathbb{B}_n^k$.
Assume that $\G$ comprises
vertices $u_1, u_2, \dots, u_r$, $x_1, x_2, \dots, x_s$, $v_1, v_2, \dots, v_l$, and
$ w_1, w_2, \dots, w_\xi$ such that $u_1, u_2, \dots, u_r$ form a cycle of length $r=n-k$ and $s \geq 0$.
Let $\G_\delta = \G - x_s w_1 + v_l w_1$.
In this way, the resulting graph $\G_\delta$
obtained by applying the $\delta$-operation will be called the $\delta$-switched graph.
Figure \ref{Fig5} employs the $\delta$-operation on $\G \in \mathbb{B}_n^k$.
	
\begin{figure}[htbp!]
\centering
\includegraphics[scale=0.70]{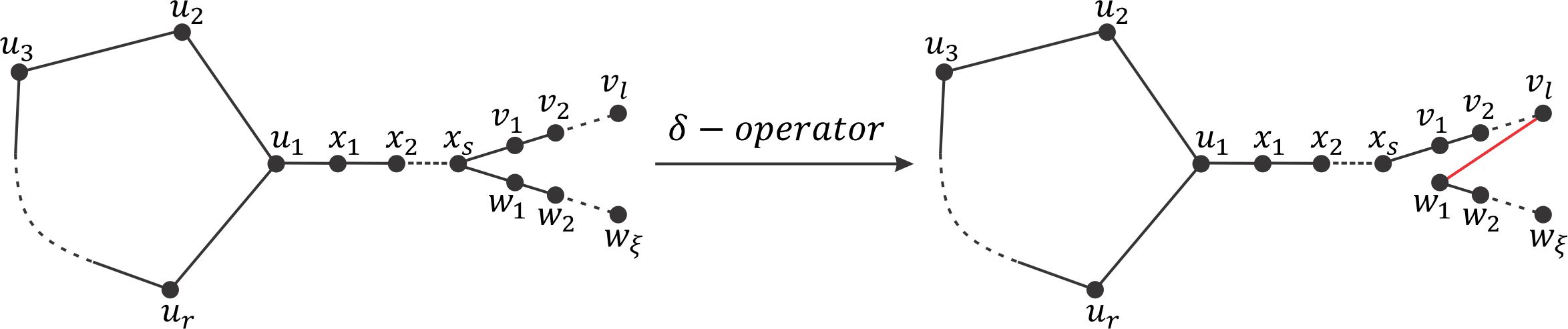}
\caption{Application of the $\delta$-operation on $\G \in \mathbb{B}_n^k$. It has been used in Lemma \ref{lemmadelta}.}\label{Fig5}
\end{figure}
	
\begin{lemma}\label{lemmadelta}
Assume $\G_\delta$ is obtained by applying the $\delta$-operation on $\G\in\mathbb{B}_n^k$ as explained in Figure \ref{Fig5}.
We have $$SO(\G_{\delta}) < SO(\G).$$
\end{lemma}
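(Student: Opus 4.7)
The plan is to carry out the same localized degree-accounting computation that proved Lemmas \ref{lemmaalpha}, \ref{lemmabeta}, and \ref{lemmagamma}. Under the $\delta$-operation $\G_\delta = \G - x_s w_1 + v_l w_1$, the only vertices whose degree changes are $x_s$ (which loses one, dropping from a branch vertex in $\G$ to a path vertex in $\G_\delta$) and $v_l$ (which gains one, rising from a pendant in $\G$ to a path vertex in $\G_\delta$); every other vertex retains its degree. Therefore only finitely many edges -- those incident to $x_s$, $v_l$, or the relocated endpoint $w_1$ -- contribute non-trivially to $SO(\G) - SO(\G_\delta)$.

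First I would read off these contributing edges from Figure \ref{Fig5}: the $x$-path edge meeting $x_s$, the additional branch at $x_s$ carrying the $v$-subtree, the removed bridge $x_s w_1$, the inserted bridge $v_l w_1$, and the $v$-path edge meeting $v_l$. Then I would write $SO(\G) - SO(\G_\delta)$ as a signed sum of terms of the form $\sqrt{\mathrm{deg}(\cdot)^2 + \mathrm{deg}(\cdot)^2}$, taking care for each term whether it is evaluated in $\G$ or in $\G_\delta$. Finally I would substitute the canonical minimum degrees: internal path vertices have degree at least $2$, pendant vertices have degree exactly $1$, and the branching vertex $x_s$ has degree at least $3$ in $\G$. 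By the monotonicity of $a \mapsto \sqrt{a^2 + c^2}$ in $a$, replacing a true degree by its lower bound only weakens the difference, so the resulting numerical estimate is a valid lower bound on $SO(\G) - SO(\G_\delta)$. Following the model of the preceding lemmas, I expect the difference to reduce to an explicit expression of the form $a \sqrt{13} + b \sqrt{5} - c \sqrt{8}$, whose positivity is a one-line arithmetic verification in the spirit of the $3\sqrt{13} - \sqrt{8} - \sqrt{5} > 0$ bound obtained in the proof of Lemma \ref{lemmaalpha}.

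The main obstacle I anticipate is the boundary casework. When one of $s$, $l$, or $\xi$ is very small -- for example $s = 0$, where no $x$-vertices exist and the role of $x_s$ collapses onto a cycle vertex of degree at least $3$, or $l = 1$, where $v_{l-1}$ coincides with the cycle attachment of the $v$-path, or $\xi = 1$, where $w_1$ is itself a pendant -- the minimum degrees of the vertices appearing in the bound change, altering the explicit numerical constant. Each such sub-case must be handled by a direct rederivation of the bound, but the monotonicity remark above guarantees that raising a minimum degree only enlarges the positive difference, so no essential structural difficulty is hidden beyond the bookkeeping.
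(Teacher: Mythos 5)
Your plan is essentially the paper's own proof: the paper writes $SO(\G)-SO(\G_\delta)$ as the signed sum over exactly the edges you list (those meeting $x_s$, $v_l$, or $w_1$), substitutes the minimal degrees, and lands on $3\sqrt{13}+\sqrt{5}-8\sqrt{2}>0$, which is the form you predicted. The boundary subcases you flag ($s=0$, $l=1$, $\xi=1$) are not treated separately in the paper either, so carrying out your computation for the generic configuration reproduces the published argument.
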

\begin{proof}
The graph $\G$ assumes a cycle $C_r = u_1 u_2 \dots u_r$, a path $u_1, x_1, x_2, \dots, x_s$
where $s=0,1,2,\dots,k$ and two pendant paths, say $v_1, v_2, \dots, v_l$ and $w_1, w_2, \dots, w_\xi$,
attached with $x_s$ ($u_1$ if $s=0$). Note that $s+l+\xi=k$. Between $SO(\G)$ and $SO(\G_{\delta})$, this shifting of edges generates
the following relation:
\begin{eqnarray*}
	SO(\G)-SO(\G_\delta) &=&  \sqrt{{\mathrm{deg}_{\G}(x_{s-1})}^2 + {\mathrm{deg}_{\G}(x_s)}^2}
	+ \sqrt{{\mathrm{deg}_{\G}(v_1)}^2 + {\mathrm{deg}_{\G}(x_s)}^2} +\\
	& &\sqrt{{\mathrm{deg}_{\G}(w_1)}^2 + {\mathrm{deg}_{\G}(x_s)}^2} +
	\sqrt{{\mathrm{deg}_{\G}(v_{l-1})}^2 + {\mathrm{deg}_{\G}(v_l)}^2} -\\
	& & \sqrt{{\mathrm{deg}_{\G_\delta}(x_{s-1})}^2 + {\mathrm{deg}_{\G_\delta}(x_s)}^2} -
	\sqrt{{\mathrm{deg}_{\G_\delta}(v_1)}^2 + {\mathrm{deg}_{\G_\delta}(x_s)}^2} -\\
	& &  \sqrt{{\mathrm{deg}_{\G_\delta}(v_{l-1})}^2 + {\mathrm{deg}_{\G_\delta}(v_l)}^2} -
	\sqrt{{\mathrm{deg}_{\G_\delta}(v_l)}^2 + {\mathrm{deg}_{\G_\delta}(w_1)}^2}	\\
	&\geq& \sqrt{2^2 + 3^2} + \sqrt{2^2 + 3^2} + \sqrt{2^2 + 3^2} + \sqrt{2^2 + 1^2} -\\
	&& \sqrt{2^2 + 2^2} - \sqrt{2^2 + 2^2} - \sqrt{2^2 + 2^2} - \sqrt{2^2 + 2^2} \\
	&=& 3 \sqrt{13} - 8 \sqrt{2} + \sqrt{5} >0
\end{eqnarray*}
This completes the proof.
\end{proof}
\wbull

Next, we present the main result of this paper.
\section{Sombor index with a given number of bridges of graphs}\label{mainresults}
This section employs auxiliary results from Section \ref{auxualiary} to evaluate a
lower bound on the Sombor index in $\mathbb{B}_n^k$. Graphs attaining the lower bound
have also been classified. For $n \geq3$ and $k = 0, 1, 2, \dots, n-3, n-1$,
let $C_{n-k}$ be a cycle having length $n-k$ and $u_1$ be an arbitrary vertex of $C_{n-k}$. 
We obtain the family $\PP_n^k$ by attaching a path $P_{k+1}$ on $k+1$ vertices to $u_1\in V(C_{n-k})$. Thus, it implies that $\PP_n^k$
a family belonging to $\mathbb{B}_n^k$ having order $n$ and $k$ number of bridges. See Figure \ref{Fig6} for a depiction of
the family $\mathbb{B}_n^k$. It is worth noticing that for an $n$-vertex in $\mathbb{B}_n^k$, we have either $k=n-1$ or
$0\leq k\leq n-3$.
	
\begin{figure}[htbp!]
\centering
\includegraphics[scale=0.70]{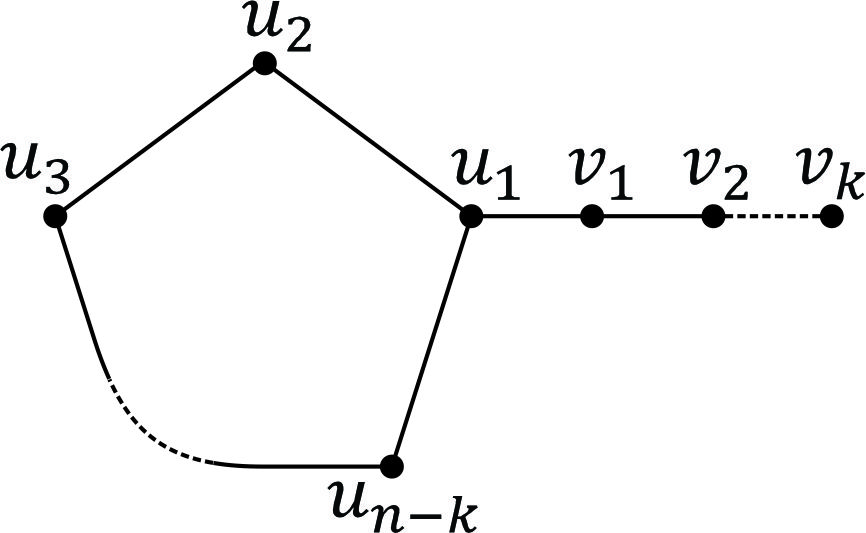}
\caption{The graph $\PP_n^k$.}
\label{Fig6}
\end{figure}

Finally, we have sufficient tools to present the main result.
\begin{theorem}\label{CycleThm}
Assume that $\G$ is a bridgeless graph of order $n$, i.e. $\G \in \mathbb{B}_n^k$ such that $k=0$. Then
$$SO(\G) \geq \sqrt{8}~n,$$
where equality is satisfied $\iff$ $\G \cong C_n$.
\end{theorem}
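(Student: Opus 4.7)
The plan is to exploit two structural consequences of being bridgeless, namely a lower bound on every vertex degree and a lower bound on the number of edges, and then combine them with the pointwise inequality $\sqrt{\deg_\G(u)^2 + \deg_\G(v)^2} \geq \sqrt{8}$ that holds whenever $\deg_\G(u), \deg_\G(v) \geq 2$.

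First I would argue that every vertex of $\G$ has degree at least $2$. Indeed, if some $x \in V(\G)$ had $\deg_\G(x) = 1$, then the unique edge incident to $x$ could not lie on any cycle, so by Theorem \ref{bridgecyclesthm} it would be a bridge, contradicting $\G \in \mathbb{B}_n^0$. (Isolated vertices are ruled out by connectivity, which for $k = 0$ is implicit in $\mathbb{B}_n^k$.) Next, since $\G$ is not a tree (by Corollary \ref{Tree_bridge}, a tree has $n-1$ bridges, not $0$), it contains at least one cycle, so Proposition \ref{connectedgraphwithcycles} yields $\epsilon(\G) \geq n$.

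The lower bound then follows at once: for every edge $uv \in E(\G)$, the inequality $\deg_\G(u), \deg_\G(v) \geq 2$ gives
\begin{equation*}
\sqrt{\deg_\G(u)^2 + \deg_\G(v)^2} \;\geq\; \sqrt{2^2 + 2^2} \;=\; \sqrt{8},
\end{equation*}
so summing over all $\epsilon \geq n$ edges yields $SO(\G) \geq \sqrt{8}\,n$.

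For the equality case, I would observe that $SO(\G) = \sqrt{8}\,n$ forces both (i) $\epsilon(\G) = n$, and (ii) every edge has both endpoints of degree exactly $2$, i.e. $\G$ is $2$-regular. A connected $2$-regular graph on $n$ vertices is $C_n$, which also has exactly $n$ edges and thus realizes the bound. The main (mild) obstacle is just being precise about the equality chain: one must verify that both inequalities are simultaneously tight and that $2$-regularity together with connectedness pins down $C_n$ uniquely, which is standard. Conversely, a direct computation gives $SO(C_n) = n \cdot \sqrt{2^2 + 2^2} = \sqrt{8}\,n$, closing the characterization.
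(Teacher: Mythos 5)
Your proof of the inequality is essentially identical to the paper's: both establish $\deg_\G(v)\geq 2$ for all $v$ from bridgelessness via Theorem \ref{bridgecyclesthm}, obtain $\epsilon\geq n$ from Proposition \ref{connectedgraphwithcycles}, and then bound each edge's contribution below by $\sqrt{8}$. The only divergence is in the equality case: the paper invokes Lemma \ref{uniquecycle} (whose proof relies on the $\alpha$- and $\upgamma$-operations) to conclude that a minimizer is unicyclic with cycle length $n$, whereas you extract the characterization directly from tightness of the two inequalities --- equality forces $\epsilon=n$ and $2$-regularity, and a connected $2$-regular graph is $C_n$. Your route is more self-contained and elementary here, since it avoids the machinery of Section \ref{auxualiary} entirely for the $k=0$ case; both arguments are correct.
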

\begin{proof}
Since $\G$ bridgeless, by Theorem \ref{bridgecyclesthm}, each edge of $\G$ lies on some cycles of $\G$.
Thus $\mathrm{deg}_{\G}(v) \geq 2$ for any $v \in V(\G)$. Since $\G$ is connected and contains at least one cycle,
by Proposition \ref{connectedgraphwithcycles}, we obtain that $m\geq n$. Therefore, we have:
\begin{eqnarray*}
SO(\G) &=& \sum _{uv\in E\left( \G\right) }\sqrt{{\mathrm{deg}_{\G}(u)}^{2}+{\mathrm{deg}_{\G}(v)}^{2}} \\
&\geq& \sum _{uv\in E\left( \G\right) }\sqrt{{2}^{2}+{2}^{2}} = \sqrt{8}~m \geq \sqrt{8}~n.
\end{eqnarray*}
By Lemma \ref{uniquecycle}, the graph $\G \in \mathbb{G}_n^0$ achieving the minimum has a unique cycle of length
$n$, thus we obtain that $\G\cong C_n$, since we have $k=0$.\\

Routine calculations show that $SO(C_n)=\sqrt{8}~n$ which, in turn, implies that the equality is 
satisfied $\iff$ $\G\cong C_n\cong\PP_n^0$.
\end{proof}
\wbull \\

Next, we show a sharp lower bound for graphs in $\mathbb{B}_n^k$ where $k\geq1$.
In Theorem \ref{OnebridgeThm}, we present the result for graphs in $\mathbb{B}_n^k$
with $k=1$.
\begin{theorem}\label{OnebridgeThm}
Assume that $\G$ is a graph in $\mathbb{G}_n^1$. So, we have
$$SO(\G) \geq 2(n-3) \sqrt{2} + \sqrt{10} + 2\sqrt{13},$$
such that equality is satisfied $\iff$ $\G \cong \PP_n^1$.
\end{theorem}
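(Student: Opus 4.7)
The plan is to leverage the structural rigidity that $k=1$ imposes, combined with Lemma \ref{uniquecycle}, to force the unique minimizer to be $\PP_n^1$, and then to verify the claimed value by a direct edge-partition computation.

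Let $\G$ be any graph in $\mathbb{G}_n^1$ achieving the minimum value of $SO$. Since $1<n-1$ for $n\ge 4$, Lemma \ref{uniquecycle} applies and delivers that $\G$ has a unique cycle $C$, of length $n-k=n-1$. Consequently $V(\G)\setminus V(C)$ has exactly one vertex, which I call $v$.

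Next, I would pin down how $v$ attaches to $C$. Every edge of $\G$ incident to $v$ misses $C$ (since $v\notin V(C)$ and $C$ is the only cycle), so by Theorem \ref{bridgecyclesthm} each such edge is a bridge. Because $\G$ has exactly one bridge, $v$ is incident to exactly one edge, i.e.\ $v$ is a pendant vertex attached to some $u\in V(C)$. Hence $\G\cong \PP_n^1$, and by Lemma \ref{lem0} this is the unique minimizer.

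Finally, I would compute $SO(\PP_n^1)$ by sorting edges according to the endpoint degrees: $u$ has degree $3$, all other cycle vertices have degree $2$, and $v$ has degree $1$. The two cycle edges incident to $u$ contribute $2\sqrt{3^2+2^2}=2\sqrt{13}$; the remaining $n-3$ cycle edges each join a degree-$2$ pair and contribute $(n-3)\sqrt{2^2+2^2}=2(n-3)\sqrt{2}$; and the pendant edge $uv$ contributes $\sqrt{3^2+1^2}=\sqrt{10}$. Summing these yields $SO(\PP_n^1)=2(n-3)\sqrt{2}+\sqrt{10}+2\sqrt{13}$, which both matches the claimed bound and shows equality exactly when $\G\cong \PP_n^1$. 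The only genuinely non-trivial ingredient is the structural reduction; once Lemma \ref{uniquecycle} and Theorem \ref{bridgecyclesthm} are invoked, the remainder is arithmetic.
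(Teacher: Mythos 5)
Your proof is correct and follows essentially the same route as the paper: invoke Lemma \ref{uniquecycle} to get a unique cycle of length $n-1$, conclude the minimizer is $\PP_n^1$, and compute $SO(\PP_n^1)$. In fact you supply two details the paper glosses over --- the bridge-counting argument showing the off-cycle vertex must be pendant, and the explicit edge-partition computation of $SO(\PP_n^1)$ --- so your write-up is, if anything, more complete.
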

\begin{proof}
Let $\G' \in \mathbb{G}_n^1$ such that $SO(\G') \leq SO(\G)$ for any $\G \in \GG_n^1$. Then, by
Lemma \ref{uniquecycle}, $\G'$ has a unique cycle of length $n-1$, which implies that $\G' \cong \PP_n^1$.
Since $SO(\PP_n^1)=2(n-3) \sqrt{2} + \sqrt{10} + 2\sqrt{13}$, we obtain that
$SO(\G) \geq 2(n-3) \sqrt{2} + \sqrt{10} + 2\sqrt{13}$ for any $\G \in \GG_n^1$.
This proves the result.
\end{proof}
\wbull \\

A similar sharp lower bound for
$2\leq k\leq n-3$ has been shown herewith.
\begin{theorem}\label{morebridgesThm}
Assume that $\G$ is a graph in $\mathbb{B}_n^k$ for $2\leq k\leq n-3$. Thus, we have
$$SO(\G) \geq 2(n-4) \sqrt{2} + \sqrt{5} + 3\sqrt{13},$$
such that equality is satisfied $\iff$ $\G \cong \PP_n^k$.
\end{theorem}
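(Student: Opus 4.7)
The plan is to follow the strategy of Theorem~\ref{OnebridgeThm}: let $\G^\star$ be a minimizer of $SO$ over $\mathbb{B}_n^k$, use the auxiliary operations of Section~\ref{auxualiary} to collapse $\G^\star$ onto $\PP_n^k$, and then compute $SO(\PP_n^k)$ directly from its degree sequence. Since $2\le k\le n-3<n-1$, Lemma~\ref{uniquecycle} applies and tells us $\G^\star$ is unicyclic with a unique cycle $C$ of length $n-k$. The remaining $k$ edges are bridges and form a forest $F$ whose components are rooted at vertices of $C$; the structural task is to show $F$ is a single pendant path attached to one vertex of $C$.

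The first reduction shows every component of $F$ is a single path. If some component $T\subseteq F$ contains a branching vertex, pick a deepest such branching $z$; the components of $T\setminus z$ that do not meet $C$ are then pendant paths, so the configuration around $z$ matches Figure~\ref{Fig5}, and Lemma~\ref{lemmadelta} produces a graph in $\mathbb{B}_n^k$ with strictly smaller Sombor index, contradicting minimality. If $z$ has more than two pendant branches, pairwise-merge them by repeated applications of Lemma~\ref{lemmadelta}. After finitely many steps every component of $F$ is a pendant path attached to a cycle vertex.

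The second reduction shows only one vertex of $C$ carries a pendant path. Suppose $u\ne u'$ in $V(C)$ both host non-empty pendant paths. If $u,u'$ are adjacent on $C$, Lemma~\ref{lemmabeta} directly gives a graph with smaller Sombor index; if they are non-adjacent, the same bridge-relocation (delete the bridge of $F$ at $u'$ and attach the detached path at the free end of the path rooted at $u$) yields the surplus $3\sqrt{13}+\sqrt{5}-8\sqrt{2}>0$, because the two cycle neighbours of $u'$ lose the $\sqrt{13}$-contributions coming from their edges to the degree-$3$ vertex $u'$. Either way minimality is contradicted, so $F$ consists of a single pendant path rooted at one vertex of $C$, i.e.\ $\G^\star\cong\PP_n^k$.

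Finally, $\PP_n^k$ has degree sequence $(3,1,2,2,\dots,2)$, producing three edges of type $(3,2)$ contributing $3\sqrt{13}$, one pendant edge of type $(2,1)$ contributing $\sqrt{5}$, and $n-4$ edges of type $(2,2)$ contributing $2(n-4)\sqrt{2}$, giving the claimed value. The main obstacle is the bookkeeping in the first reduction: Lemma~\ref{lemmadelta} is stated for the exact configuration of Figure~\ref{Fig5} with branching-vertex degree equal to $3$, so one must argue that working at a \emph{deepest} branching matches its hypotheses, and that the positive surplus in Lemma~\ref{lemmadelta} persists when the local degrees at the branching are larger (since raising these degrees increases the $\G$-side and $\G_\delta$-side contributions at matching rates while the removed edge contribution grows faster than the added one). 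A similar robustness check is needed for the non-adjacent variant of Lemma~\ref{lemmabeta} used in the second reduction.
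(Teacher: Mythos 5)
Your proposal follows the paper's own proof essentially step for step: take a minimizer in $\mathbb{B}_n^k$, invoke Lemma~\ref{uniquecycle} to conclude it is unicyclic with cycle length $n-k$, collapse the attached forest of bridges onto a single pendant path by repeated applications of Lemmas~\ref{lemmabeta} and~\ref{lemmadelta}, and finish by computing $SO(\PP_n^k)=2(n-4)\sqrt{2}+\sqrt{5}+3\sqrt{13}$ from the degree sequence. The bookkeeping you flag (working at a deepest branching vertex, the non-adjacent variant of the $\beta$-operation, robustness of the inequalities under larger local degrees) is exactly what the paper compresses into the phrase ``successive application,'' so your version is the same argument spelled out in more detail.
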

\begin{proof}
Let $\G' \in \mathbb{B}_n^k$ such that $SO(\G') \leq SO(\G)$ for any $\G \in \mathbb{B}_n^k$, then by
Lemma \ref{uniquecycle}, $\G'$ has a unique cycle of length $n-k$. By a successive application
of Lemma \ref{lemmabeta} and \ref{lemmadelta}, we obtain that $\G' \cong \PP_n^k$. Routine
calculations show that $SO(\PP_n^k)= 2(n-4) \sqrt{2} + \sqrt{5} + 3\sqrt{13}$. Thus, we have
$SO(\G) \geq 2(n-4) \sqrt{2} + \sqrt{5} + 3\sqrt{13}$ for any $\G \in \mathbb{B}_n^k$, where $2\leq k\leq n-3$.
This completes the proof.
\end{proof}
\wbull \\

And finally, we show a sharp lower bound for $\G \in \mathbb{B}_n^k$ with $k=n-1$.
Note that by Corollary \ref{Tree_bridge}, $\G$ is a tree.
\begin{theorem}\label{TreeThm}
For an $n$-vertex tree, we obtain
$$SO(T) \geq 2(n-3)\sqrt{2}+2\sqrt{5},$$
such that the equality is satisfied $\iff$ $T \cong P_n \cong \PP_n^{n-1}$.
\end{theorem}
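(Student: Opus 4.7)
The plan is to establish that, among all $n$-vertex trees, the path $P_n$ uniquely minimizes the Sombor index, and then to compute $SO(P_n)$ explicitly. First I would note that by Corollary \ref{Tree_bridge}, the family of $n$-vertex trees coincides with the subfamily of $\mathbb{B}_n^k$ for $k=n-1$, so the theorem is a statement about the minimizer of $SO$ over all $n$-vertex trees.

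Let $T^*$ be an $n$-vertex tree that achieves the minimum value of $SO$, and suppose toward contradiction that $T^* \not\cong P_n$. Fix a diametral path $u_1, u_2, \ldots, u_l$ in $T^*$. By Proposition \ref{endvertex}, both $u_1$ and $u_l$ have degree $1$ in $T^*$. If every internal vertex $u_2,\ldots,u_{l-1}$ had degree exactly $2$, then this path would form a connected component, forcing $T^*\cong P_n$, a contradiction. Hence there is an internal vertex $u_r$ with $\mathrm{deg}_{T^*}(u_r)\geq 3$, which therefore carries at least one branch off the diametral path.

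Applying the $\tau$-operation from Lemma \ref{lem00} at such a branching vertex $u_r$ (shifting a pendant branch out to the antipodal endpoint $u_l$) produces a new $n$-vertex tree $T^*_\tau$ with $SO(T^*_\tau)<SO(T^*)$, contradicting the minimality of $T^*$. Thus $T^*\cong P_n$. To conclude, $P_n$ has two pendant vertices and $n-2$ internal vertices of degree $2$, so its edge set consists of two edges joining a degree-$1$ vertex to a degree-$2$ vertex and $n-3$ edges joining two degree-$2$ vertices; consequently
$$SO(P_n) \;=\; 2\sqrt{1^2+2^2} + (n-3)\sqrt{2^2+2^2} \;=\; 2\sqrt{5} + 2(n-3)\sqrt{2},$$
which agrees with the claimed bound and identifies $P_n\cong \PP_n^{n-1}$ as the unique extremal tree.

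The main technical obstacle is making sure Lemma \ref{lem00} is actually applicable at the branching vertex we select: the lemma as stated depicts $u_r$ with exactly two branches $x_1,y_1$ off the diametral path, whereas a minimal counterexample might only produce a $u_r$ of degree $3$ (one branch). This is essentially a bookkeeping issue — the same shifting argument (moving any pendant branch at an interior diametral vertex to the antipodal endpoint, which only decreases incident degrees on the interior and increases only the degree at the leaf) gives a strictly smaller Sombor index by a direct recomputation analogous to the estimate in Lemma \ref{lem00}. Once that adaptation is in hand, iterating the $\tau$-operation drives any tree down to $P_n$ and the explicit computation above finishes the proof.
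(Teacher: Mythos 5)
Your proof follows essentially the same route as the paper: both argue that a minimizing tree cannot have a vertex of degree at least $3$ by invoking the $\tau$-operation of Lemma \ref{lem00}, conclude the minimizer is $P_n$, and finish with the direct computation $SO(P_n)=2(n-3)\sqrt{2}+2\sqrt{5}$. The caveat you raise about the degree-$3$ case (where the branching vertex carries only one pendant branch, whereas Lemma \ref{lem00} as stated equips $u_r$ with two branches $x_1,y_1$) is a genuine mismatch in the lemma's hypotheses that the paper's own proof glosses over as well, so your explicit acknowledgment and sketch of the adaptation is, if anything, more careful than the original.
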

\begin{proof}
First, we show the following claim for a tree $T\in\mathbb{B}^n_k$ achieving the
minimum Sombor index.
\begin{claim}\label{claim0}
$1\leq\mathrm{deg}_{T}(u)\leq2$ for any $u\in V(T)$.
\end{claim}
\noindent{\em Proof of Claim \ref{claim0}.}
We show the claim by contradiction. We assume the existence of a $y\in V(T)$, on contrary, satisfying
$\mathrm{deg}_{T}(y)\geq3$. A successive application of Lemma \ref{lem00} by assuming $y=u_k\in V(T_{\tau})$
(see Figure \ref{Fig1}) generates a tree $T'$ satisfying $SO(T)>SO(T')$. However, $T$ was assumed to possess
minimum Sombor index. This arises a contradiction. This proves the claim.
\wbull \\
		
Claim \ref{claim0} implies that $T \cong P_n$, as $T$ is connected.
One could calculate that $SO(P_n) = 2(n-3)\sqrt{2}+2\sqrt{5}$, which
completes the proof.
\end{proof}
\wbull \\

Now, in view of Theorems \ref{CycleThm}, \ref{OnebridgeThm}, \ref{morebridgesThm}, and  \ref{TreeThm},
we conclude the main result as follows.
\begin{theorem}
Let $\G \in \mathbb{B}_n^k$. Then
\begin{equation*}
SO(\G) \geq
\left\{\begin{split}
                 &\sqrt{8}~n; && \text{for} \quad k=0, \\
				 &2(n-3) \sqrt{2} + \sqrt{10} + 2\sqrt{13}; && \textrm{for} \quad k=1, \\
				 &2(n-4) \sqrt{2} + \sqrt{5} + 3\sqrt{13}; && \textrm{for} \quad 2\leq k\leq n-3, \\
				 &2(n-3)\sqrt{2}+2\sqrt{5}; && \textrm{for} \quad k=n-1,
\end{split}\right.
\end{equation*}
such that the equality is satisfied $\iff$ $\G \cong \PP_n^k$.
\end{theorem}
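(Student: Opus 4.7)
The plan is to observe that this consolidated theorem is essentially a case analysis that directly invokes the four theorems already established, namely Theorems \ref{CycleThm}, \ref{OnebridgeThm}, \ref{morebridgesThm}, and \ref{TreeThm}. First I would note the structural observation already made in the discussion preceding Theorem \ref{CycleThm}: for any connected $n$-vertex graph with $k$ bridges, one must have either $k = n-1$ (in which case the graph is a tree by Corollary \ref{Tree_bridge}) or $0 \leq k \leq n-3$. The value $k = n-2$ is impossible, since removing a single non-bridge edge from a connected graph must leave at least one cycle, forcing the remaining bridges to number at most $n-3$ when $k < n-1$.

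Next I would split into four exhaustive sub-cases corresponding exactly to the classification in the statement. For $k = 0$, Theorem \ref{CycleThm} yields the bound $\sqrt{8}\,n$ with equality $\iff$ $\G \cong C_n \cong \PP_n^0$. For $k = 1$, Theorem \ref{OnebridgeThm} directly gives $SO(\G) \geq 2(n-3)\sqrt{2} + \sqrt{10} + 2\sqrt{13}$ with equality $\iff$ $\G \cong \PP_n^1$. For the intermediate range $2 \leq k \leq n-3$, Theorem \ref{morebridgesThm} gives $SO(\G) \geq 2(n-4)\sqrt{2} + \sqrt{5} + 3\sqrt{13}$ with equality $\iff$ $\G \cong \PP_n^k$. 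Finally, for $k = n-1$, Corollary \ref{Tree_bridge} tells us $\G$ is a tree, and Theorem \ref{TreeThm} gives $SO(\G) \geq 2(n-3)\sqrt{2} + 2\sqrt{5}$ with equality $\iff$ $\G \cong P_n \cong \PP_n^{n-1}$.

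Since each of these four sub-cases already contains both the inequality and the characterization of extremal graphs, the combined statement follows immediately by piecing the cases together. There is no real obstacle, because all the heavy lifting, namely the application of the $\tau$-, $\alpha$-, $\B$-, $\upgamma$-, and $\delta$-operations from Section \ref{auxualiary} together with Lemma \ref{uniquecycle}, has been performed in the preceding four theorems. The only point requiring a brief justification is the exhaustion of cases, i.e.\ verifying that $\{0\} \cup \{1\} \cup \{2,\ldots,n-3\} \cup \{n-1\}$ covers every admissible value of $k$ for $\G \in \mathbb{B}_n^k$, which is precisely the remark already recorded before Theorem \ref{CycleThm}. Hence the proof reduces to a short paragraph invoking the four theorems and concluding that equality holds throughout $\iff$ $\G \cong \PP_n^k$ in the corresponding case.
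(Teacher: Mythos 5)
Your proposal is correct and matches the paper exactly: the paper states this theorem with no separate proof, deriving it immediately "in view of" Theorems \ref{CycleThm}, \ref{OnebridgeThm}, \ref{morebridgesThm}, and \ref{TreeThm}, together with the remark preceding Theorem \ref{CycleThm} that the admissible values of $k$ are $0\leq k\leq n-3$ or $k=n-1$. Your added justification for excluding $k=n-2$ is a small bonus the paper leaves implicit.
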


\section{Sombor index of graphs with given vertex-connectivity}

In this section, we give a sharp upper bound on $SO(\G)$ for graphs in $\mathbb{V}_n^k$.
For this we need the following crucial result:
\begin{lemma}\label{pq1}
Let $k_1,\,k,\,k_2$ and $n$ be positive integers such that $n=k_1+k+k_2$ with $k_1\geq k_2$. Also let
\begin{align*}
f(k_1,\,k_2)&=\sqrt{2}\,{k_1\choose 2}\,(k+k_1-1)+\sqrt{2}\,{k\choose 2}\,(n-1)+\sqrt{2}\,{k_2\choose 2}\,(k+k_2-1)\\[2mm]
&~~~~~~~~~~+kk_1\,\sqrt{(n-1)^2+(k+k_1-1)^2}+kk_2\,\sqrt{(n-1)^2+(k+k_2-1)^2}.
\end{align*}
Then $f(k_1+1,\,k_2-1)-f(k_1,\,k_2)>0$.
\end{lemma}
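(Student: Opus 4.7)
The plan is to reduce the inequality to a one-variable discrete convexity statement. Observe that in the definition of $f$, the middle summand $\sqrt{2}\binom{k}{2}(n-1)$ is invariant under the replacement $(k_1, k_2) \mapsto (k_1+1, k_2-1)$, since $n = k_1 + k + k_2$ is preserved. Introducing
$$g(x) := \sqrt{2}\binom{x}{2}(k+x-1) + kx\sqrt{(n-1)^2 + (k+x-1)^2},$$
one has $f(k_1, k_2) = g(k_1) + g(k_2) + C$ for a constant $C$ independent of the split. Therefore
$$f(k_1+1, k_2-1) - f(k_1, k_2) = \bigl[g(k_1+1) - g(k_1)\bigr] - \bigl[g(k_2) - g(k_2-1)\bigr],$$
and it suffices to prove that the first forward difference of $g$ is strictly increasing on the nonnegative integers, or equivalently that $\Delta^2 g(x) := g(x+1) - 2g(x) + g(x-1) > 0$ for every integer $x \geq 1$. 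Telescoping then gives the right-hand side above as $\sum_{x=k_2}^{k_1} \Delta^2 g(x)$, a strictly positive sum whenever $k_1 \geq k_2$.

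To verify $\Delta^2 g > 0$, split $g = P + S$, where $P(x) = \frac{\sqrt{2}}{2}x(x-1)(k+x-1)$ is the polynomial part and $S(x) = kx\sqrt{(n-1)^2 + (k+x-1)^2}$ is the surd part. For $P$, expand as a cubic $\tfrac{\sqrt{2}}{2}\bigl[x^3 + (k-2)x^2 - (k-1)x\bigr]$ and apply the elementary identities $\Delta^2 x^3 = 6x$, $\Delta^2 x^2 = 2$, $\Delta^2 x = 0$ to obtain $\Delta^2 P(x) = \sqrt{2}(3x + k - 2)$, which is positive for every $x \geq 1, k \geq 1$.

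For $S$, I would use the integral form of Taylor's remainder,
$$\Delta^2 S(x) = \int_{x}^{x+1}(x+1-t)\,S''(t)\,dt + \int_{x-1}^{x}(t-x+1)\,S''(t)\,dt,$$
so that it suffices to show $S''(t) \geq 0$ on $[0, \infty)$ with strict positivity on a subinterval of positive measure. A direct computation (introducing $N = n - 1$, $c = k - 1$, and applying the product rule twice) yields
$$S''(t) = k\cdot\frac{N^2(3t + 2c) + 2(t+c)^3}{\bigl[N^2 + (t+c)^2\bigr]^{3/2}},$$
which is manifestly nonnegative for $t \geq 0, c \geq 0$ and vanishes only at the single point $t = 0$ in the degenerate case $c = 0$ (i.e.\ $k = 1$). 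Since the integration weights $(x+1-t)$ and $(t-x+1)$ are strictly positive on the interiors $(x, x+1)$ and $(x-1, x)$ respectively, we conclude $\Delta^2 S(x) > 0$ for every integer $x \geq 1$, and hence $\Delta^2 g(x) = \Delta^2 P(x) + \Delta^2 S(x) > 0$, as required.

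The main obstacle is the surd contribution: the product $x\sqrt{(n-1)^2 + (k+x-1)^2}$ is not obviously convex, as a nonnegative linear factor times a convex function need not be convex in general. The computation of $S''$ must therefore be carried out explicitly, but the two terms arising from the product rule combine cleanly to give the positive numerator displayed above, which makes the verification routine.
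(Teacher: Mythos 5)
Your argument is correct, and it takes a genuinely different route from the paper's. You package each part's entire contribution into the single one-variable function $g(x)=\sqrt{2}\binom{x}{2}(k+x-1)+kx\sqrt{(n-1)^2+(k+x-1)^2}$, observe that $f(k_1,k_2)=g(k_1)+g(k_2)+\sqrt{2}\binom{k}{2}(n-1)$ with $n$ and $k$ invariant under the shift, and reduce everything to the strict discrete convexity $\Delta^2 g(x)>0$ for integers $x\ge 1$, after which the claim telescopes for any $k_1\ge k_2$. I checked the two computations this hinges on: $\Delta^2 P(x)=\sqrt{2}\,(3x+k-2)>0$, and $S''(t)=k\bigl[N^2(3t+2c)+2(t+c)^3\bigr]\bigl[N^2+(t+c)^2\bigr]^{-3/2}$ with $N=n-1$, $c=k-1$, which is indeed what the product rule gives and is positive on $(0,\infty)$; since the integral representation of $\Delta^2 S(x)$ only ever samples $[x-1,x+1]\subseteq[0,\infty)$ for $x\ge k_2\ge 1$, the conclusion follows. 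The paper instead manipulates the two-variable difference directly: it expands the binomial terms by hand, isolates the convexity of the bare map $t\mapsto\sqrt{(n-1)^2+t^2}$ as its Claim 2 (the increment from $k+k_1-1$ to $k+k_1$ exceeds that from $k+k_2-2$ to $k+k_2-1$), proves that via an algebraic product inequality (Claim 1) obtained by squaring, and then handles the unequal multipliers $kk_1\ge kk_2$ in a separate regrouping step. The paper's route is calculus-free but its final combination is delicate bookkeeping; yours requires the explicit second derivative of $S$ (genuinely necessary, as you note, since a nonnegative linear factor times a convex function need not be convex), but it is more systematic, yields strictness uniformly, and makes transparent why $f$ increases monotonically along the whole chain to the extremal split $(n-k-1,1)$ used in Theorem \ref{thmconnectivity}.
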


\begin{proof} We have
\begin{align}
&f(k_1+1,\,k_2-1)-f(k_1,\,k_2)\nonumber\\[2mm]
=&\sqrt{2}\,\left[{k_1+1\choose 2}\,(k+k_1)-{k_1\choose 2}\,(k+k_1-1)\right]+\sqrt{2}\,\left[{k_2-1\choose 2}\,(k+k_2-2)-{k_2\choose 2}\,(k+k_2-1)\right]\nonumber\\[3mm]
&+k\,(k_1+1)\,\sqrt{(n-1)^2+(k+k_1)^2}-kk_1\,\sqrt{(n-1)^2+(k+k_1-1)^2}\nonumber\\[3mm]
&+k\,(k_2-1)\,\sqrt{(n-1)^2+(k+k_2-2)^2}-kk_2\,\sqrt{(n-1)^2+(k+k_2-1)^2}.\label{1kin1}
\end{align}
Now,
\begin{align}
{k_1+1\choose 2}\,(k+k_1)-{k_1\choose 2}\,(k+k_1-1)&=\frac{k_1}{2}\,\Big[(k_1+1)\,(k+k_1)-(k_1-1)\,(k+k_1-1)\Big]\nonumber\\
   &=\frac{k_1}{2}\,(2k+3k_1-1)\label{1kin2}
\end{align}
and
\begin{align}
{k_2-1\choose 2}\,(k+k_2-2)-{k_2\choose 2}\,(k+k_2-1)&=-\frac{k_2-1}{2}\,\Big[k_2\,(k+k_2-1)-(k_2-2)\,(k+k_2-2)\Big]\nonumber\\
   &=\frac{k_2-1}{2}\,(2k+3k_2-4),\label{1kin3}
\end{align}

\vspace*{3mm}

\noindent
${\bf Claim\,1.}$
$$\sqrt{(n-1)^2+(k+k_1)^2}\,\sqrt{(n-1)^2+(k+k_2-2)^2}>\sqrt{(n-1)^2+(k+k_2-1)^2}\,\sqrt{(n-1)^2+(k+k_1-1)^2}.$$

\vspace*{3mm}

\noindent
${\bf Proof\,of\,Claim\,1.}$ We have to prove that
$$\Big[(n-1)^2+(k+k_1)^2\Big]\,\Big[(n-1)^2+(k+k_2-2)^2\Big]>\Big[(n-1)^2+(k+k_2-1)^2\Big]\,\Big[(n-1)^2+(k+k_1-1)^2\Big],$$
that is,
\begin{align*}
&(n-1)^2\,\Big[(k+k_1)^2+(k+k_2-2)^2\Big]+(k+k_1)^2\,(k+k_2-2)^2>(n-1)^2\,\Big[(k+k_1-1)^2+(k+k_2-1)^2\Big]\\
&~~~~~~~~~~~~~~~~~~~~~~~~~~~~~~~~~~~~~~~~~~~~~~~~~~~~~~~~~~~~~~~~~~~+(k+k_1-1)^2\,(k+k_2-1)^2,
\end{align*}
that is,
$$2(n-1)^2\,(k_1-k_2+1)>\Big[(k+k_1)^2-2\,(k+k_1)+1\Big]\,(k+k_2-1)^2-(k+k_1)^2\,\Big[(k+k_2-1)^2-2\,(k+k_2-1)+1\Big],$$
that is,
$$2(n-1)^2\,(k_1-k_2+1)>(k+k_2-1)^2-(k+k_1)^2+2\,(k+k_1)\,(k+k_2-1)\,(k_1-k_2+1),$$
which is true always as $k_1\geq k_2$, that is, $k+k_2-1<k+k_1$ and $n-1>k+k_1>k+k_2-1$. This proves the {\bf Claim 1}.

\vspace*{3mm}

\noindent
${\bf Claim\,2.}$
\begin{align*}
&\sqrt{(n-1)^2+(k+k_1)^2}-\sqrt{(n-1)^2+(k+k_1-1)^2}\\
&~~~~~~~~~~~~~~~~~~~~~~~~~~~~~~~~>\sqrt{(n-1)^2+(k+k_2-1)^2}-\sqrt{(n-1)^2+(k+k_2-2)^2}.
\end{align*}

\vspace*{3mm}

\noindent
${\bf Proof\,of\,Claim\,2.}$ We have to prove that
\begin{align*}
&\sqrt{(n-1)^2+(k+k_1)^2}+\sqrt{(n-1)^2+(k+k_2-2)^2}\\
&~~~~~~~~~~~~~~~~~~~~~~~~~~~~~~~~>\sqrt{(n-1)^2+(k+k_2-1)^2}+\sqrt{(n-1)^2+(k+k_1-1)^2},
\end{align*}
that is,
\begin{align*}
&(n-1)^2+(k+k_1)^2+(n-1)^2+(k+k_2-2)^2+2\,\sqrt{(n-1)^2+(k+k_1)^2}\,\sqrt{(n-1)^2+(k+k_2-2)^2}\\
&~~~~>(n-1)^2+(k+k_2-1)^2+(n-1)^2+(k+k_1-1)^2+2\,\sqrt{(n-1)^2+(k+k_2-1)^2}\\
&~~~~~~~~~~~~~~~~~~~~~~~~~~~~~~~\times\sqrt{(n-1)^2+(k+k_1-1)^2},
\end{align*}
that is,
\begin{align*}
&k_1-k_2+1+\sqrt{(n-1)^2+(k+k_1)^2}\,\sqrt{(n-1)^2+(k+k_2-2)^2}\\
&~~~~~~~~~~~~~~~~~~~~~~~~~~~-\sqrt{(n-1)^2+(k+k_2-1)^2}\,\sqrt{(n-1)^2+(k+k_1-1)^2}>0,
\end{align*}
which is true as $k_1\geq k_2$ and by {\bf Claim 1}. This proves the {\bf Claim 2}.

\vspace*{3mm}

Using (\ref{1kin2}), (\ref{1kin3}) with $k_1\geq k_2$ and {\bf Claim 2} in (\ref{1kin1}), we obtain
\begin{align*}
&~~f(k_1+1,\,k_2-1)-f(k_1,\,k_2)\\[2mm]
&=\frac{k_1}{2}\,(2k+3k_1-1)-\frac{(k_2-1)}{2}\,(2k+3k_2-4)+kk_1\,\Big[\sqrt{(n-1)^2+(k+k_1)^2}-\sqrt{(n-1)^2+(k+k_1-1)^2}\Big]\\[3mm]
&~~~~~~~~+kk_2\,\Big[\sqrt{(n-1)^2+(k+k_2-2)^2}-\sqrt{(n-1)^2+(k+k_2-1)^2}\Big]\\[3mm]
&~~~~~~~~+k\,\Big[\sqrt{(n-1)^2+(k+k_1)^2}-\sqrt{(n-1)^2+(k+k_2-2)^2}\Big]\\[3mm]
&\geq\frac{k_2}{2}\,(2k+3k_2-1)-\frac{(k_2-1)}{2}\,(2k+3k_2-4)+kk_2\,\Big[\sqrt{(n-1)^2+(k+k_1)^2}-\sqrt{(n-1)^2+(k+k_1-1)^2}\\[3mm]
&~~~~~~~~+\sqrt{(n-1)^2+(k+k_2-2)^2}-\sqrt{(n-1)^2+(k+k_2-1)^2}\Big]\\[3mm]
&~~~~~~~~+k\,\Big[\sqrt{(n-1)^2+(k+k_2)^2}-\sqrt{(n-1)^2+(k+k_2-2)^2}\Big]>0
\end{align*}
as $\frac{k_1}{2}\,(2k+3k_1-1)>\frac{(k_2-1)}{2}\,(2k+3k_2-4)$ and $\sqrt{(n-1)^2+(k+k_2)^2}-\sqrt{(n-1)^2+(k+k_2-2)^2}>0$.
This completes the proof of the result.
\end{proof}
\wbull \\

Denote by $\mathbb{V}_n^k$ the set of $n$-vertex connected graphs with vertex-connectivity $k$, where $1\leq k\leq n-1$.
\begin{theorem}\label{thmconnectivity}
Let $\G\in \mathbb{V}_n^k$ with $1\leq k\leq n-1$. Then
$$SO(\G)\leq \sqrt{2}\,{n-k-1\choose 2}\,(n-2)+\sqrt{2}\,{k\choose 2}\,(n-1)+k\,(n-k-1)\,\sqrt{(n-1)^2+(n-2)^2}+k\,\sqrt{(n-1)^2+k^2}$$
with equality if and only if $\G\cong (K_{n-k-1}\cup K_1)\vee K_k$.
\end{theorem}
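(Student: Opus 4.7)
The plan is to take a graph $\G \in \mathbb{V}_n^k$ maximizing $SO(\G)$ and, through a short sequence of structural reductions, identify $\G \cong (K_{n-k-1} \cup K_1) \vee K_k$. The case $k = n-1$ is immediate because $\mathbb{V}_n^{n-1} = \{K_n\}$ and a direct calculation shows $SO(K_n)$ equals the displayed bound, so I assume $1 \leq k \leq n-2$ throughout.

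Step 1 (Edge saturation). Let $S \subset V(\G)$ be a minimum vertex-cut, so $|S| = k$, and let $V_1, \ldots, V_s$ (with $s \geq 2$) be the vertex sets of the components of $\G - S$, with $k_i := |V_i|$. I would first show that $\G = (K_{k_1} \cup \cdots \cup K_{k_s}) \vee K_k$. Let $\G'$ be the supergraph of $\G$ obtained by inserting every missing edge inside $S$, inside each $V_i$, and between each $V_i$ and $S$. A direct check confirms that $\G'$ has vertex-connectivity exactly $k$: removing $S$ still separates the $V_i$'s so $\kappa(\G') \leq k$; conversely, for any $T \subset V(\G')$ with $|T| < k$ there exists some $x \in S \setminus T$, and $x$ is adjacent in $\G'$ to every other vertex, hence $\G' - T$ is connected. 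So $\G' \in \mathbb{V}_n^k$, and iterated application of Lemma \ref{lem0} in reverse gives $SO(\G') \geq SO(\G)$ with equality iff $\G = \G'$; maximality forces $\G = \G'$.

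Step 2 (Reducing to two components). Next I would show $s = 2$. If $s \geq 3$, form $\G'' = (K_{k_1 + k_2} \cup K_{k_3} \cup \cdots \cup K_{k_s}) \vee K_k$ from $\G$ by adding all edges between $V_1$ and $V_2$. Since $s - 1 \geq 2$ components remain after removing $S$ from $\G''$, the same connectivity argument shows $\G'' \in \mathbb{V}_n^k$, and Lemma \ref{lem0} then yields $SO(\G'') > SO(\G)$, contradicting maximality. So $s = 2$ and $\G = (K_{k_1} \cup K_{k_2}) \vee K_k$ with $k_1 + k_2 = n - k$; relabeling, WLOG $k_1 \geq k_2 \geq 1$.

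Step 3 (Balancing the partition). A direct degree count gives $\mathrm{deg}_\G(v) = k + k_i - 1$ for $v \in V(K_{k_i})$ and $\mathrm{deg}_\G(v) = n - 1$ for $v \in V(K_k)$, so $SO(\G)$ coincides exactly with the quantity $f(k_1, k_2)$ defined in Lemma \ref{pq1}. Repeated application of Lemma \ref{pq1} gives $f(k_1, k_2) < f(k_1 + 1, k_2 - 1) < \cdots < f(n-k-1, 1)$ whenever $k_2 \geq 2$, so the maximum is attained only at $(k_1, k_2) = (n-k-1, 1)$, i.e., $\G \cong (K_{n-k-1} \cup K_1) \vee K_k$. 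Summing the Sombor contributions of this extremal graph over its four edge-classes (inner $K_{n-k-1}$ edges, inner $K_k$ edges, the $k(n-k-1)$ cross edges to $K_k$, and the $k$ edges from the isolated vertex) then recovers the bound stated in the theorem.

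The principal obstacle is the connectivity bookkeeping in Step 1: one must verify that saturating all edges within and across $S$ and the $V_i$'s does not push the vertex-connectivity above $k$, which the universal-vertex-in-$S \setminus T$ observation handles cleanly. Given that, Step 2 is a short merging argument, and Step 3 is essentially an immediate consequence of the already-established Lemma \ref{pq1}.
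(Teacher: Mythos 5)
Your proposal is correct and follows essentially the same route as the paper: identify the cut set $S$, use Lemma \ref{lem0} to saturate $\G$ into $(K_{k_1}\cup K_{k_2})\vee K_k$ while checking the connectivity stays at $k$, reduce to two components, and then apply Lemma \ref{pq1} repeatedly to push the partition to $(n-k-1,1)$. The only difference is cosmetic ordering (you saturate edges before merging components, the paper argues two components first), so nothing of substance separates the two arguments.
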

\begin{proof}
When $k=n-1$, there is only one graph $K_n$, which can be viewed as a special case of $(K_{n-k-1}\cup K_1)\vee K_k$ with $k=n-1$. We now assume that $1\leq k\leq n-2$. Let $\Om$ be a graph in $\mathbb{V}_n^k$ such that $SO(\G)\leq SO(\Om)$. Since $k$ is the vertex connectivity of graph $\Om$, let $S=\{v_1,\,v_2,\ldots,\,v_k\}\subseteq V(\Om)$ be the vertex cut and $|S|=k$. By Lemma \ref{lem0}, we obtain that $\Om[S]$ must be a complete graph $K_k$.
We now prove that $\Om-S$ has exactly two components. To the contrary, assume that there exist at least three components $\Om_1$, $\Om_2$ and $\Om_3$ with $v_i\in V(\Om_i)$ for $i=1,\,2$. Then we find that $\Om+v_1v_2$ is a connected graph of order $n$ with vertex connectivity $k$, with a higher Sombor index than that of
$\Om$ by Lemma \ref{lem0}, contradicting to the choice of $\Om$. Now we assume that $\Om-S=\Om_1\cup \Om_2$, where $\Om_1$ and $\Om_2$ are the two
components of $\Om-S$. Again by Lemma \ref{lem0}, we conclude that $\Om_1$ and $\Om_2$ are all cliques and each vertex in $S$ is incident
to all vertices in $\Om_1\cup \Om_2$. Hence $\Om\cong (K_{k_1}\cup K_{k_2})\vee K_k$, where $k_1+k_2=n-k$. Without loss of generality, we can assume that $k_1\geq k_2$. Then
\begin{align*}
SO(\Om)&=\sum _{uv\in E\left(\Om\right) }\sqrt{{\mathrm{deg}_{\Om}(u)}^{2}+{\mathrm{deg}_{\Om}(v)}^{2}} \\[2mm]
&=\sqrt{2}\,{k_1\choose 2}\,(k+k_1-1)+\sqrt{2}\,{k\choose 2}\,(n-1)+\sqrt{2}\,{k_2\choose 2}\,(k+k_2-1)\\[2mm]
&~~~~~~~~~~+kk_1\,\sqrt{(n-1)^2+(k+k_1-1)^2}+kk_2\,\sqrt{(n-1)^2+(k+k_2-1)^2}=f(k_1,\,k_2),~\mbox{(say)}.
\end{align*}
If $k_2=1$, then $\Om\cong (K_{n-k-1}\cup K_{1})\vee K_k$ and hence the equality holds. Otherwise, $k_2\geq 2$. By Lemma \ref{pq1}, we obtain
$$f(k_1,\,k_2)<f(k_1+1,\,k_2-1)<\cdots<f(k_1+k_2-2,2)<f(k_1+k_2-1,1).$$
From the above results, we obtain
\begin{align*}
SO(\G)\leq SO(\Om)\leq f(k_1,\,k_2)&<f(k_1+k_2-1,1)\\
&=\sqrt{2}\,{n-k-1\choose 2}\,(n-2)+\sqrt{2}\,{k\choose 2}\,(n-1)\\
&~~~~~+k\,(n-k-1)\,\sqrt{(n-1)^2+(n-2)^2}+k\,\sqrt{(n-1)^2+k^2}.
\end{align*}
This completes the proof of the theorem.
\end{proof}
\wbull \\

\section{Conclusions}
This paper studies the Sombor index of $n$-vertex graphs having $k$ bridges i.e. $\mathbb{B}_n^k$.
Recently in 2021, Horoldagva \& Xu \cite{HX2021} found a sharp upper bound on the Sombor index of graphs in $\mathbb{B}_n^k$
As a continuance of this study, in this paper,
we find a sharp lower bound on the Sombor index of graphs in $\mathbb{B}_n^k$.
Moreover, graphs achieving the lower bound have been characterized.
Moreover, we find a sharp upper bound on the Sombor index of graphs in $\mathbb{V}_n^k$. Corresponding
extremal graphs achieving the bound have been characterized.

Let $\mathbb{E}_n^k$ be the set of all $n$-vertex graphs with edge connectivity at most $k$.
Here we would like to remark the following:
\begin{remark}
The problem of finding maximum Sombor index
of graphs in $\mathbb{E}_n^k$ is similar to Theorem \ref{thmconnectivity} and the same
unique graph i.e. $(K_{n-k-1}\cup K_1)\vee K_k$ is expected to achieve the maximum.
\end{remark}

Having said that, we propose the following open problems on the Sombor index of graphs.
\begin{problem}
Find minimal graphs with respect to the Sombor index among graphs in $\mathbb{V}_n^k$.
\end{problem}

\begin{problem}
Find minimal graphs with respect to the Sombor index among graphs in $\mathbb{E}_n^k$.
\end{problem}

 \vspace*{4mm}

\noindent
{\it Acknowledgement.} K. C. Das is supported by National Research Foundation funded by the Korean government (Grant No. 2021R1F1A1050646).

 \vspace*{4mm}

\end{document}